\numberwithin{equation}{section}
\theoremstyle{definition}
\newtheorem{definition}{Definition}[section]
\newtheorem{remark}[definition]{Remark}
\theoremstyle{plain}
\newtheorem{theorem}[definition]{Theorem}
\newtheorem{lemma}[definition]{Lemma}
\newtheorem{example}[definition]{Example}
\newtheorem{conjecture}[definition]{Conjecture}
\newtheorem{result}[definition]{Result}
\newcommand{\eps}{\varepsilon}
\newcommand{\zt}{\zeta}
\newcommand{\zbar}{\overline{z}}
\newcommand{\tht}{\theta}
\newcommand{\zahl}{\mathbb{Z}}  
\newcommand{\nat}{\mathbb{N}}
\newcommand{\rem}{\mathcal{R}}
\newcommand\partl[2]{\dfrac{\partial{#1}}{\partial{#2}}}
\newcommand{\bdy}{\partial}
\newcommand{\dom}{\mathscr{D}}
\newcommand{\OM}{\Omega}
\newcommand{\dee}{\mathbb{D}}
\newcommand{\smoo}{\mathcal{C}}
\newcommand{\hol}{\mathcal{O}}
\newcommand{\er}{{\sf Re}}
\newcommand{\mi}{{\sf Im}} 
\newcommand{\lead}{\mathscr{F}_m}
\newcommand{\leadC}{\mathscr{F}_C}
\newcommand{\derivF}{\mathcal{Q}_m}
\newcommand{\dellead}{\mathfrak{p}_m}
\newcommand{\MasInd}{{\rm Ind}_{\mathcal{M}}}
\newcommand\pathInd[1]{{\rm Ind}_{\mathcal{M},{#1}}}
\newcommand\discInd[1]{{\rm Ind}_{\mathcal{M},{#1}}}
\newcommand{\impl}{\Longrightarrow}
\newcommand{\lrarw}{\longrightarrow}
\newcommand{\bcdot}{\boldsymbol{\cdot}}
\newcommand{\grass}{G_{tot.\mathbb{R}}}
\newcommand\cis[1]{e^{i{#1}}}
\newcommand{\cnj}{\mathfrak{H}}
\newcommand{\bish}{\mathscr{A}}
\newcommand{\invrt}{\mathbb{A}}
\newcommand\hnrm[1]{\left\|{#1}\right\|_{\smoo^{\alpha}}}
\newcommand\hlpart[1]{\left[{#1}\right]_{\alpha}}
\newcommand\uninrm[1]{\left\|{#1}\right\|_{\infty}}
\newcommand\smoonrm[2]{\left\|{#1}\right\|_{\smoo^{{#2}}(\overline{\dee})}}
\newcommand{\CC}{\mathbb{C}^2}
\newcommand{\Cn}{\mathbb{C}^n}
\newcommand{\cplx}{\mathbb{C}} 
\newcommand{\rl}{\mathbb{R}}
\newcommand{\srf}{\mathcal{S}}   
\begin{document}

\title[The polynomial hull and Bishop discs]{The local polynomial hull near a degenerate \\
CR~singularity -- Bishop discs revisited}

\author{Gautam Bharali}

\thanks{This work is supported by the DST via the Fast Track grant SR/FTP/MS-12/2007 and
by the UGC under DSA-SAP, Phase~IV}

\address{Department of Mathematics, Indian Institute of Science, Bangalore -- 560012, India}
\email{bharali@math.iisc.ernet.in}

\keywords{Bishop disc, complex tangency, CR~singularity, polynomially convex}
\subjclass[2000]{Primary 32E20, 46J10; Secondary 30E10}

\begin{abstract}
Let $\srf$ be a smooth real surface in $\CC$ and let $p\in\srf$ be a point at which the
tangent plane is a complex line. How does one determine whether or not
$\srf$ is locally polynomially convex at such a $p$ --- i.e. at a
CR~singularity\,? Even when the order of contact of $T_p(\srf)$ with $\srf$ at $p$ equals $2$,
no clean characterisation exists; difficulties are posed by parabolic points. Hence, we
study {\em non-parabolic} CR~singularities. We show that the presence or
absence of Bishop discs around certain non-parabolic CR~singularities is completely determined
by a Maslov-type index. This result subsumes all known facts about Bishop discs around order-two,
non-parabolic CR~singularities. Sufficient conditions for Bishop discs have earlier been
investigated at CR~singularities having high order of contact with $T_p(\srf)$. These results
relied upon a subharmonicity condition, which fails in many simple cases. Hence, we look 
beyond potential theory and refine certain ideas going back to Bishop. 
\end{abstract}
\maketitle

\section{Introduction and statement of results}\label{S:intro}

The simplest motive for this work is a rather naive one: we would like to know when, given 
a real surface $\srf\subset\CC$ and a point $p\in\srf$ at which $T_p(\srf)$ is a complex line,
$\srf$ is locally polynomially convex at $p$. For $\srf$ having only isolated exceptional points,
this knowledge would enable one to determine whether $\srf$ has a Stein neighbourhood basis.
Insights into this naive problem would enable one to make tangible use of the many results about
polynomial approximation on compact $2$-submanifolds $\srf\subset\CC$ {\em with boundary}, 
most of which presuppose the polynomial convexity of $\srf$.
\smallskip

We shall call a point of complex tangency a {\em CR~singularity}. Consider a
CR~singularity $p\in\srf\subset\CC$ where the order of  contact of $T_p(\srf)$ with $\srf$ equals 
$2$ --- i.e. a {\em non-degenerate} CR~singularity. Bishop showed \cite{bishop:dmcEs65} that there exist
holomorphic coordinates $(z,w)$ centered at $p$ such that $\srf$ is locally given (barring one manifestly
locally polynomially convex case) by the equation $w=|z|^2+\gamma(z^2+\zbar^2)+G(z)$, 
where $\gamma\geq 0$, $G(z)=O(|z|^3)$, and three distinct situations arise. In Bishop's 
terminology, the CR~singularity $p=(0,0)$ is called elliptic if 
$0\leq\gamma<1/2$, parabolic if $\gamma=1/2$, and hyperbolic if $\gamma>1/2$. Bishop \cite{bishop:dmcEs65}
showed that when $p\in\srf$ is elliptic, the polynomially convex hull of $\srf$ near $p$ contains 
a one-parameter family of non-constant analytic discs attached to $\srf$ that shrink to $p$.   
On the other hand, Forstneri{\v{c}} and Stout \cite{forstnericStout:ncpcs91} showed that when $p$ 
is hyperbolic, $\srf$ is locally polynomially convex at $p$. In the applications hinted at, {\em we
may not have the option of perturbing the given $\srf$ at all}, whence the genericity of 
non-degenerate CR singularities cannot aid the study of such applications. Given this, one might ask 
what we can say about $(\srf,p)$ if $p$ is a {\em degenerate} CR~singularity.

Even if $p$ is a CR~singularity in $\srf$ where the order of  contact of $T_p(\srf)$ with 
$\srf$ equals $2$, J{\"o}ricke's results in \cite{joricke:lphdnipp97} show that the situation is
far from tidy when $p$ is a parabolic point. 
One would expect some assumptions on the pair $(\srf,p)$ (for $p$ a degenerate 
CR~singularity) for the outlines of a reasonable pattern, consistent with what is already
known, to emerge. This motivates the following:

\begin{definition}\label{D:non-para}
Let $\srf$ be a $\smoo^k$-smooth real
surface in $\cplx^2$, $k\geq 3$, and let $p\in\srf$ be an isolated CR~singularity.
We say that $p$ is {\em non-parabolic} if there exist an integer $m$, $2\leq m<k$, and 
holomorphic coordinates $(z,w)$ centered at $p$ relative to which $\srf$ has a local 
defining equation
\begin{equation}\label{E:normform}
\srf\cap U_p: \quad w \ = \ \lead(z,\zbar) + \rem(z)
\end{equation}
such that the graph $\Gamma(\lead)$ has an isolated CR singularity at $(0,0)\in\CC$. 
Here, $\lead$ is a homogeneous polynomial in $z$ and $\zbar$ of degree $m$ and
$\rem$ is $O(|z|^{m+1})$.
\end{definition}
\smallskip

Note that when $p\in\srf$ is either elliptic or hyperbolic, it is non-parabolic in the sense of
Definition~\ref{D:non-para}. We wish to extend the Bishop/Forstneri{\v{c}}--Stout
dichotomy (for non-parabolic, non-degenerate CR~singularities) to the degenerate setting. When
$(\srf,p)$ is presented in the Bishop normal form near a non-parabolic, non-degenerate $p$,
we have holomorphic coordinates $(z,w)$ in which  --- using the notation of 
\eqref{E:normform} --- $\mathscr{F}_2$ is real-valued. This last fact is of central
importance to Bishop's proofs in \cite[Section~3]{bishop:dmcEs65}. This motivates the following:

\begin{definition}\label{D:thin}
Let $\srf$ be a $\smoo^k$-smooth real surface in $\cplx^2$, $k\geq 3$, 
and let $p\in\srf$ be an isolated CR~singularity.   
Suppose $T_p(\srf)$ has finite order of contact $2\leq m<k$ with $\srf$ at $p$. 
We say that {\em $\srf$ is thin at $p$} if there exist holomorphic coordinates $(z,w)$ centered at 
$p$ such that $\srf$ is locally a graph of the form \eqref{E:normform} above, and with respect to
which $\lead$ ($\lead$ has the same meaning as in Definition~\ref{D:non-para}) is real-valued.
\end{definition}
\smallskip

When, for the pair $(\srf,p)$, $p$ is a non-parabolic, non-degenerate CR~singularity (in which case
$\srf$ is {\em always} thin at $p$) the works \cite{bishop:dmcEs65} and \cite{forstnericStout:ncpcs91},
when read together, imply that the local polynomial convexity of $\srf$ at $p$ is determined
precisely by the sign of a certain Maslov-type index, denoted by $\MasInd(\srf,p)$. 
Specifically:
\begin{itemize}
\item[$(*)$] {\em  When $p\in\srf$ is a non-parabolic, non-degenerate (hence thin) CR~singularity, 
$\srf$ is locally polynomially convex at $p$ if and only if $\MasInd(\srf,p)\leq 0$.}
\end{itemize}
The reader is directed to Section~\ref{S:Maslov} for the precise definition of the index 
$\MasInd(\srf,p)$. The goal of this paper is to attempt to extend $(*)$ to non-parabolic, degenerate
CR~singularities. That brings us to our first theorem, which says, among other things, that 
$\MasInd(\srf,p)>0\impl \srf$ is {\em not} locally polynomially convex at $p$. 

\begin{theorem}\label{T:posInd}
Let $\srf$ be a $\smoo^k$-smooth real surface in $\CC$, $k\geq 3$,  and 
let $p\in\srf$ be a CR~singularity. Assume that $p$
is non-parabolic and that $\srf$ is thin at $p$. If $\MasInd(\srf,p)>0$, then $\srf$ is {\em not}
locally polynomially convex at $p$. 
\smallskip

In fact, there exists a $\smoo^1$-smooth family of analytic discs whose boundaries
are contained in $\srf$. More precisely: there exist a neighbourhood $U_p\ni p$, an open 
interval $(0,R_0)$, and a
function $\mathfrak{g}:(0,R_0)\lrarw A^\alpha(\dee;\cplx^2)$ that is of class 
$\smoo^1$ on $(0,R_0)$ (for an arbitrary but fixed $\alpha\in (0,1)$), where each 
$\mathfrak{g}(r)$ is a non-constant analytic disc satisfying
\begin{enumerate}
\item[$i)$] $\mathfrak{g}(r)(\bdy\dee)\subset(\srf\setminus\{p\})\cap U_p \ \forall r\in(0,R_0)$; and
\item[$ii)$] $\mathfrak{g}(r)(\zt)\lrarw\{p\}$ for each $\zt\in\overline{\dee}$ as $r\lrarw 0^+$.
\end{enumerate}
\end{theorem}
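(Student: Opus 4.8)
\medskip
\noindent\textit{Strategy.} The plan is to carry out Bishop's original disc construction --- a rescaling, a Riemann--Hilbert problem, and the implicit function theorem --- in the ``thin'' coordinates supplied by the hypotheses. Fix holomorphic coordinates $(z,w)$ centred at $p$ as in Definition~\ref{D:thin}, so that $\srf\cap U_p$ is the graph $w=\lead(z,\zbar)+\rem(z)$ with $\lead$ \emph{real-valued}, homogeneous of degree $m$, and $\rem=O(|z|^{m+1})$. An analytic disc $(Z,W)\in A^\alpha(\dee;\CC)$ is attached to $\srf$ precisely when $W=\lead(Z,\overline{Z})+\rem(Z)$ on $\bdy\dee$; as the right-hand side then furnishes the boundary values of the holomorphic function $W$, this holds iff that right-hand side extends holomorphically to $\dee$ --- equivalently, iff its imaginary part is the harmonic conjugate of its real part --- which is Bishop's nonlinear singular-integral equation for the boundary values of $Z$. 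Rescaling by $Z=r\xi$ with $r\in(0,R_0)$ and $\xi\in A^\alpha(\dee;\cplx)$ holomorphic, the homogeneity of $\lead$ and the bound on $\rem$ convert this, after division by $r^{m}$, into an equation $\Psi(\xi,r)=0$ in which $\Psi$ is of class $\smoo^1$ in $(\xi,r)$ for $r>0$, jointly continuous up to $r=0$, and satisfies $\Psi(\cdot,0)=0$ iff $\lead(\xi,\overline{\xi})$ is constant on $\bdy\dee$.

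The second step produces a nondegenerate solution of the limiting equation $\Psi(\cdot,0)=0$, and this is where the hypothesis $\MasInd(\srf,p)>0$ is used. By the description of the index in Section~\ref{S:Maslov}, together with the thinness of $\srf$ at $p$, the hypothesis furnishes a value $c_0\neq0$ for which the level set $\{z:\lead(z,\zbar)=c_0\}$ contains a simple closed curve $\Gamma_0$ encircling the origin; and such a $\Gamma_0$ is smooth, since ``$\Gamma(\lead)$ has an isolated CR singularity at $0$'' is precisely the statement that $\lead_{\zbar}(z,\zbar)\neq0$ for $z\neq0$, so that $c_0$ is a regular value of $\lead$ along $\Gamma_0$. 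Let $\xi_0$ be a Riemann map of $\dee$ onto the bounded component of $\cplx\setminus\Gamma_0$; by Carath\'eodory's theorem and the smoothness of $\Gamma_0$, $\xi_0$ extends to an element of $\smoo^\infty(\overline{\dee})$ with $\xi_0(\bdy\dee)=\Gamma_0$ (covered once), $0\notin\Gamma_0$, and $\lead(\xi_0,\overline{\xi_0})\equiv c_0$ on $\bdy\dee$. Then $\xi_0$ solves $\Psi(\cdot,0)=0$, and $(\xi_0,c_0)$ is a non-constant analytic disc attached to the model surface $\{w=\lead(z,\zbar)\}$.

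For the perturbation, note that the linearisation $D_\xi\Psi(\xi_0,0)$ carries a holomorphic variation $\eta$ to the anti-holomorphic part (the strictly negative Fourier modes) of $D\lead(\xi_0,\overline{\xi_0})[\eta]=2\,\er(a\eta)$, where $a:=\lead_z(\xi_0,\overline{\xi_0})=\overline{\lead_{\zbar}(\xi_0,\overline{\xi_0})}$. By non-parabolicity $a$ is nowhere zero on $\bdy\dee$; and since $\lead_{\zbar}$ is a positive multiple of a normal field of the level curve $\{\lead=c_0\}$ --- whose direction winds once around $\Gamma_0$ --- while $\xi_0$ covers $\Gamma_0$ once, the winding number of $a$ along $\bdy\dee$ equals $-1$. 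The classical solvability theory for the Riemann--Hilbert problem with a coefficient of negative winding number then shows that $D_\xi\Psi(\xi_0,0)$ is onto, with a finite-dimensional kernel containing the rotation direction $\zt\mapsto i\zt\,\xi_0'(\zt)$. Splitting off a closed complement of this kernel and invoking the implicit function theorem --- in a version requiring only that $\Psi$ be $\smoo^1$ in $\xi$ and jointly continuous near $r=0$ --- produces $\xi(r)$ solving $\Psi(\xi(r),r)=0$, with $\xi(0)=\xi_0$, continuous on $[0,R_0)$, and of class $\smoo^1$ on $(0,R_0)$ after shrinking $R_0$.

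It remains to assemble and verify. Put $Z_r:=r\,\xi(r)$, let $W_r$ be the holomorphic extension to $\dee$ of the boundary function $\lead(Z_r,\overline{Z_r})+\rem(Z_r)$ --- which exists exactly because $\Psi(\xi(r),r)=0$ --- and set $\mathfrak{g}(r):=(Z_r,W_r)$. Since $\xi(r)\to\xi_0$ in $A^\alpha$, the disc $\mathfrak{g}(r)$ is non-constant, and $\mathfrak{g}(r)(\bdy\dee)\subset\srf$ by construction; because $Z_r(\bdy\dee)$ is Hausdorff-close to $\{rz:z\in\Gamma_0\}$, it stays a positive distance from $0$, whence $\mathfrak{g}(r)(\bdy\dee)\subset(\srf\setminus\{p\})\cap U_p$ for small $r$, which is~$(i)$; and $\|Z_r\|_{A^\alpha}=r\,\|\xi(r)\|_{A^\alpha}\to0$ forces $\|W_r\|_{A^\alpha}\to0$ by boundedness of the Cauchy projection on $A^\alpha$ ($\alpha\in(0,1)$), so that $\mathfrak{g}(r)(\zt)\lrarw p$ for each $\zt\in\overline{\dee}$ as $r\to0^+$, which is~$(ii)$. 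The same boundedness, with the $\smoo^1$-dependence of $\xi$ on $r$, makes $r\mapsto\mathfrak{g}(r)\in A^\alpha(\dee;\CC)$ of class $\smoo^1$ on $(0,R_0)$. The main obstacle here is the nondegeneracy step of the third paragraph --- converting $\MasInd(\srf,p)>0$, through the analysis of Section~\ref{S:Maslov}, into the winding-number fact that makes the linearised Riemann--Hilbert problem surjective. The rest is essentially bookkeeping: the regularity of the composition operators $\xi\mapsto\lead(\xi,\overline{\xi})$ and $\xi\mapsto r^{-m}\rem(r\xi)$ on $A^\alpha$, and the handling of the edge $r=0$ in the implicit function theorem.
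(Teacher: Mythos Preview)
Your approach and the paper's share the same skeleton --- use Lemma~\ref{L:key} to see that $\lead>0$ on $\cplx\setminus\{0\}$, take the Riemann map onto the region bounded by a level curve $\lead^{-1}\{c_0\}$ as the model disc, and perturb via a Bishop equation whose linearisation is a scalar Riemann--Hilbert problem with coefficient $a=\lead_z\circ\xi_0$. The paper's Step~2 computation (the factorisation $\cis{\bcdot}\lead_z(g_r)=r^{m-1}R/(\kappa\widetilde G')$ on $\bdy\dee$ with $R>0$) is exactly your winding-number fact, derived by differentiating the identity $\lead\circ\xi_0\equiv c_0$.

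The genuine difference is in how the degeneracy at $r=0$ is handled. The paper works with an \emph{unrescaled} functional $\Phi(\psi,r)$ whose $\psi$-derivative $\Lambda_r$ has norm of order $r^{m-1}$ and hence collapses at $r=0$; the Note in Step~2 stresses that this blocks any direct implicit-function-theorem argument. The paper therefore runs a contraction-mapping argument for each fixed $r>0$ on a ball of radius $r^{1+\delta}$ (Step~3 consists of detailed H\"older estimates on $Q$, $(\er\rem)$, and $\cnj[(\mi\rem)]$ to verify the contraction), obtains a fixed point $\psi_r$, and only then applies the implicit function theorem locally at $(\psi_r,r)$ for $r>0$, patching the local pieces together via a separate continuity estimate \eqref{E:conti}. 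Your division by $r^m$ removes this degeneracy at its source: the linearisation at $(\xi_0,0)$ is genuinely surjective with a finite-dimensional kernel (the ${\rm Aut}(\dee)$ and scaling directions), so after normalising these away a single application of the implicit function theorem near $r=0$ does the work. This is conceptually cleaner and sidesteps the explicit Step~3 estimates entirely. The price is that the deferred ``bookkeeping'' --- the uniform $\smoo^\alpha$-regularity of $\xi\mapsto r^{-m}\rem(r\xi)$ down to $r=0$ (which needs $\rem\in\smoo^{1,\alpha}$ with $(m+1)$-st order vanishing, and care since $\srf$ is only $\smoo^k$ with $k>m$), the precise IFT variant requiring only joint continuity in $r$ at the endpoint, and the choice of normalisation that splits off the four-dimensional kernel --- actually carries most of the analytic content and would need to be written out for the proof to be complete.
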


\noindent{Here, and elsewhere in this paper,
$A^\alpha(\dee;\cplx^2)$ denotes the class of all $\cplx^2$-valued maps on $\overline{\dee}$ that
are holomorphic on $\dee$ and whose restrictions to $\bdy\dee$ are of H{\"o}lder class 
$\smoo^\alpha(\bdy\dee)$.}

\begin{remark}
The term ``thin'' must not be confused with the term ``flat'', which appears in the literature
on polynomial convexity. $\srf$ would be {\em flat} at $p$ if $\rem$ (as given by 
\eqref{E:normform}) were {\em also} real-valued. The term ``thin'' arises in some parts of
the literature on potential theory, but is unrelated to Definition~\ref{D:thin}.
\end{remark}    
\smallskip

Before commenting on the relationship between Theorem~\ref{T:posInd}
and other results on the same theme in the literature, let us present a partial converse of
Theorem~\ref{T:posInd}. 

\begin{theorem}\label{T:negInd}
Let $\srf$ be a $\smoo^k$-smooth real surface in $\CC$, $k\geq 3$, and 
let $p\in\srf$ be a CR~singularity. Assume that $p$
is non-parabolic and that $\srf$ is thin at $p$. Suppose $\MasInd(\srf,p)\leq 0$.
\begin{enumerate}
\item[1)] Let $(z,w)$ be holomorphic coordinates centered at $p$ such that
(by hypothesis) $\srf$ is locally defined by
\begin{equation}\label{E:normform2}
\srf\cap U_p: \quad w \ = \ \lead(z,\zbar) + \rem(z) 
 \quad(\text{$\rem$ is $O(|z|^{m+1})$ for $|z|$ small}),
\end{equation} 
where $\lead$ is a real-valued polynomial that is homogeneous of degree $m$, $2\leq m<k$.
If $\rem$ is real-valued, then $\srf$ is locally polynomially convex at $p$.
\item[2)] Suppose $k\geq 4$.
Given any $\alpha\in(0,1)$ (now $\rem$ need not be real-valued), 
it is impossible to find a continuous
one-parameter family $\mathfrak{g}:(0,1)\lrarw A^\alpha(\dee;\CC)$ of immersed, non-constant
analytic discs having all the following properties:
\begin{itemize}
\item $\mathfrak{g}(t)(\bdy\dee)\subset(\srf\setminus\{p\})\cap U_p \ \forall t\in(0,1)$;
\item $\mathfrak{g}(t)(\cis{\bcdot})$ is a simple closed curve in $\srf \
\forall t\in(0,1)$; and
\item $\mathfrak{g}(t)(\zt)\lrarw\{p\}$ for each $\zt\in\overline{\dee}$ as $t\lrarw 0^+$.
\end{itemize}
\end{enumerate}
\end{theorem}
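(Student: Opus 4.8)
The plan for part~(1) is a slicing argument. Since $\lead$ and $\rem$ are both real-valued, $\srf$ lies near $p$ in the slab $\cplx\times\rl$; and since $\mi\,w=\er(-iw)$ is pluriharmonic and vanishes on $\srf$, the local polynomial hull of $\srf$ lies in $\cplx\times\rl$ as well. Writing $\phi:=\lead+\rem$, a real-valued function of $z,\zbar$, one uses the fact that the hull of the graph $\{(z,\phi(z)):|z|\le\epsilon\}$ is fibred over $\rl$ with fibre over $c$ equal to $\widehat{\{z:\phi(z)=c,\ |z|\le\epsilon\}}\times\{c\}$; hence it suffices to show that, for $\epsilon$ small, these planar level sets have trivial polynomial hull in $\cplx$, i.e.\ no bounded complementary component inside $\{|z|<\epsilon\}$. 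Now the isolated-CR-singularity condition forces every zero of $\lead$ on $\crcl$ to be simple, so $\lead$ is either strictly of one sign on $\crcl$ or changes sign there; the hypothesis $\MasInd(\srf,p)\le0$ excludes the first alternative (that is the elliptic-type model, in which $\lead$ is sign-definite). Consequently $\lead$ changes sign, its level curves run from boundary to boundary of every small disc and enclose nothing, and by homogeneity together with $\rem=O(|z|^{m+1})$ the same holds for $\phi$ near $0$. The only non-elementary ingredient here is the fibering/slicing lemma for polynomially convex hulls contained in $\cplx\times\rl$.

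For part~(2) I would argue by contradiction: suppose a family $\mathfrak{g}:(0,1)\lrarw A^\alpha(\dee;\CC)$ as in the statement exists, and write $\srf$ near $p$ as the graph $w=\phi(z,\zbar)$, $\phi=\lead+\rem$. For small $t$ the curve $\mathfrak{g}(t)(\cis{\bcdot})$ is simple and the projection $\pi_z$ to the $z$-coordinate is injective on $\srf\cap U_p$, so $\pi_z\circ\mathfrak{g}(t)$ maps $\bdy\dee$ homeomorphically onto a Jordan curve; by the argument principle $\pi_z\circ\mathfrak{g}(t)$ is then a biholomorphism of $\dee$ onto the Jordan domain $D_t$ it bounds, and after reparametrisation $\mathfrak{g}(t)$ becomes $z\mapsto(z,F_t(z))$ with $F_t\in\hol(D_t)\cap\smoo(\overline{D_t})$ and $F_t|_{\bdy D_t}=\phi|_{\bdy D_t}$. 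Such an $F_t$ exists precisely when the moment conditions $\int_{\bdy D_t}\phi\,z^n\,dz=0$, equivalently $\iint_{D_t}(\partial_{\zbar}\phi)\,z^n\,dA=0$, hold for all $n\ge0$. Since the discs shrink to $p$, $\sup_{D_t}|z|\to0$; and since $\partial_{\zbar}\phi=\partial_{\zbar}\lead+O(|z|^{m})$ with $\partial_{\zbar}\lead$ homogeneous of degree $m-1$, the weighted rescaling $(z,w)\mapsto(r_t^{-1}z,\,r_t^{-m}w)$, $r_t:=\operatorname{diam}(D_t)\to0$, fixes the model surface $\Gamma(\lead)$ and pushes $\rem$ to lower order, so the rescaled discs are, in the limit, attached to $\Gamma(\lead)$ and their moment conditions reduce to $\iint(\partial_{\zbar}\lead)\,z^n\,dA\to0$ over the normalised domains $r_t^{-1}D_t$.

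Next one disposes of the case in which $\bdy D_t$ does not wind around $0$ — equivalently $0\notin\overline{D_t}$, so the whole disc avoids the line $\{z=0\}\ni p$ and is attached to the totally real surface $\srf\setminus\{p\}$. As the winding number of $\bdy D_t$ about $0$ is a continuous integer this happens for all small $t$, and a rescaled limit (after a normalisation keeping the disc non-constant) is then a non-constant analytic disc attached to $\Gamma(\lead)\setminus\{0\}$ whose $z$-component omits $0$ and has boundary values in the zero locus $\{\lead=0\}$, which — since $\MasInd(\srf,p)\le0$ makes $\lead$ change sign on $\crcl$ — is a union of finitely many rays through the origin; but a non-constant analytic disc omitting $0$ with boundary in a union of such rays must be constant (a branch of $\log$ of its $z$-component has harmonic imaginary part taking values in a finite set, hence constant). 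So for all small $t$ the Jordan curve $\bdy D_t$ winds once around $0$; thus $0\in D_t$.

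It remains to derive a contradiction in this case from the rescaled moment conditions, and here the \emph{thinness} hypothesis is essential: $\lead$ real-valued gives $\overline{\partial_{\zbar}\lead}=\partial_z\lead$, so the limiting moment conditions come in conjugate pairs, $\iint(\partial_{\zbar}\lead)\,z^n\,dA\to0$ and $\iint(\partial_z\lead)\,z^n\,dA\to0$ over the normalised domains. The plan is to extract from $\MasInd(\srf,p)\le0$ a single finite combination $q$ of the $z^n$ for which the corresponding combination of these integrals has a definite sign and modulus $\gtrsim\iint|z|^{\,m-1+\deg q}\,dA$ over every admissible domain; inserting $q$ into the exact ($\phi$-)moment relation, in which the $\rem$-term is of strictly lower order, then gives the contradiction. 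In favourable cases — the non-degenerate hyperbolic model, or $\lead=\er(z^m)$ — a single moment already works, with real part $\le -c\iint|z|^{\,m+n_0-1}\,dA<0$; in general the relevant diagonal coefficient need not be real and ``diagonal dominance'' can fail, so one must use all the moment conditions together (equivalently analyse the Cauchy/Hilbert-transform problem along $\bdy D_t$) and pass to the limit in the family of normalised domains while controlling their possible degeneration. Turning $\MasInd(\srf,p)\le0$ into such a quantitative, shape-independent obstruction — and controlling the geometry of the rescaled discs — is the main obstacle and the point at which Bishop's original argument must be refined; the hypothesis $k\ge4$ is what supplies enough regularity of the discs up to $\bdy\dee$ for the argument-principle reduction and the limiting argument to be valid.
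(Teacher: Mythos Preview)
Your argument for Part~(1) is correct and essentially identical to the paper's: both use the sign-change of $\lead$ (deduced from Lemma~\ref{L:key} together with $\MasInd\leq 0$) to ensure that the level sets of $\lead+\rem$ near $0$ are arcs that do not separate the plane, and then apply a fibering lemma for polynomial hulls (the paper cites Stout's Theorem~1.2.16; you call it the ``fibering/slicing lemma'').

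Your approach to Part~(2), however, does not reach a conclusion. You yourself write that turning $\MasInd(\srf,p)\leq 0$ into a ``quantitative, shape-independent obstruction'' via moment conditions ``is the main obstacle,'' and you do not carry it out --- so the principal case $0\in D_t$ is left open. Even the preliminary case $0\notin\overline{D_t}$ is not solid: after the homogeneous rescaling the limiting $w$-component, being holomorphic with real boundary values (here you are implicitly using that $\lead$ is real), must be a real constant $c$, so the boundary lies in $\{\lead=c\}$; there is no reason $c=0$ as you assert, and the normalisation needed to extract a non-trivial limit is not addressed when $\operatorname{dist}(0,D_t)/\operatorname{diam}(D_t)$ is unbounded.

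The paper's proof of Part~(2) is entirely different and much shorter. It invokes a rigidity theorem of Forstneri\v{c} (Theorem~2 of \cite{forstneric:adbmrsCC87}, paraphrased in the paper as Result~\ref{R:forstInd}): if an immersed analytic disc is attached to a totally real $\smoo^4$ surface along a loop of Maslov-type index $\leq 0$, then every nearby analytic disc with boundary in that surface is a reparametrisation of it. Since the boundary loops $\mathfrak{g}(t)(\cis{\bcdot})$ for small $t$ are simple closed curves winding once around $p$ in $\srf\setminus\{p\}$, their path-index equals $\MasInd(\srf,p)\leq 0$; rigidity then contradicts the existence of a continuous family with genuinely varying image. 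This is also the true reason for the hypothesis $k\geq 4$: Forstneri\v{c}'s theorem requires the ambient totally real surface to be $\smoo^4$-smooth, not (as you suggest) extra regularity for an argument-principle or limiting argument.
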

\smallskip

The point of Part~(2) of Theorem~\ref{T:negInd} is to observe that, although we do not know whether  
$\MasInd(\srf,p)\leq 0$ implies that $\srf$ is locally polynomially convex at $p$, the local
polynomially convex hull of $(\srf,p)$ does not contain non-constant analytic discs (with boundaries
in $\srf\setminus\{p\}$) that shrink to $p$. Note also that each part of Theorem~\ref{T:negInd} 
can be viewed as a partial converse to Theorem~\ref{T:posInd}. These lead us to suggest the 
following conjecture:

\begin{conjecture}\label{C:iff}
Let $\srf$ be a $\smoo^k$-smooth real surface in $\CC$, $k\geq 3$, and 
let $p\in\srf$ be a CR~singularity. Assume that $p$
is non-parabolic and that $\srf$ is thin at $p$. Then, $\srf$ is locally polynomially convex at
$p$ if and only if $\MasInd(\srf,p)\leq 0$.
\end{conjecture}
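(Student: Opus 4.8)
\emph{This statement is posed as a conjecture; what follows is a proposal for how one would attempt to prove it, building on Theorems~\ref{T:posInd} and~\ref{T:negInd}.} One direction is already in hand: if $\srf$ is locally polynomially convex at $p$, then the contrapositive of Theorem~\ref{T:posInd} gives $\MasInd(\srf,p)\le 0$, since otherwise the discs produced there would place $p$ in the local hull of $\srf$ while keeping their boundaries inside $\srf\setminus\{p\}$. Thus the whole remaining content of the conjecture is the reverse implication: \emph{$\MasInd(\srf,p)\le0\impl\srf$ is locally polynomially convex at $p$}; equivalently, one must delete the auxiliary hypothesis that $\rem$ be real-valued from Theorem~\ref{T:negInd}(1). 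To begin, fix the coordinates $(z,w)$ supplied by thinness, so that $\srf\cap U_p=\{w=\lead(z,\zbar)+\rem(z)\}$ with $\lead$ real, homogeneous of degree $m$, and $\Gamma(\lead)$ having an isolated CR singularity at $0$; split $\rem=\rem_1+i\rem_2$ into real and imaginary parts and let $\srf_0\cap U_p=\{w=\lead(z,\zbar)+\rem_1(z)\}$ be the flattened surface, which \emph{is} locally polynomially convex at $p$ by Theorem~\ref{T:negInd}(1).

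The plan is then to argue by contradiction. Suppose $\srf$ is not locally polynomially convex at $p$. By the structure theory for polynomial hulls of real surfaces (Bishop, Wermer, Alexander), refined near the CR singularity, $\widehat{\srf}_{U_p}\setminus\srf$ should be a $1$-dimensional complex analytic subvariety of $U_p\setminus\srf$ with boundary contained in $\srf\setminus\{p\}$ and with closure meeting $p$. Slicing and normalising this variety in shrinking neighbourhoods of $p$ --- or, alternatively, performing the parabolic rescaling $(z,w)\mapsto(z/\delta,w/\delta^m)$ and extracting a limiting non-constant analytic disc attached to the tangent cone $\{w=\lead(z,\zbar)\}$ --- one expects to produce exactly the object forbidden by Theorem~\ref{T:negInd}(2): a continuous one-parameter family of immersed, non-constant analytic discs bounding simple closed curves in $\srf\setminus\{p\}$ that shrink to $p$. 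The hypothesis $\MasInd(\srf,p)\le0$ enters precisely here, since it is what Theorem~\ref{T:negInd}(2) converts into the non-existence of such a family (and, on the rescaling route, into a Maslov/winding obstruction to discs attached to $\{w=\lead\}$, in the spirit of Forstneri{\v{c}}--Stout). This contradiction would force $\widehat{\srf}_{U_p}=\srf\cap U_p$, which is the assertion of the conjecture.

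The hard part is clearly the passage from ``the local hull of $\srf$ contains non-trivial analytic structure accumulating at $p$'' to ``there is a \emph{continuous family of immersed} discs with \emph{simple closed} boundary curves in $\srf\setminus\{p\}$ shrinking to $p$'', which is the exact hypothesis of Theorem~\ref{T:negInd}(2): near a CR singularity the standard structure theorems for hulls of $2$-manifolds are fragile and typically give little control as one approaches the singular point, so one would have to prove a parametrisation-and-compactness statement for $\widehat{\srf}_{U_p}$ near $p$ robust enough to feed into Theorem~\ref{T:negInd}(2). A related difficulty is that local polynomial convexity is not an open condition under $\smoo^k$ perturbations, so one genuinely cannot shortcut the argument by perturbing $\rem$ to $\rem_1$ and quoting Theorem~\ref{T:negInd}(1); the index must do real work. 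I expect the resolution to come from pushing the Bishop-type analysis of this paper further --- tracking the functional equation satisfied by the boundaries $\mathfrak{g}(t)(\bdy\dee)$ and the associated Maslov data along a hypothetical degenerating family --- rather than from potential theory.
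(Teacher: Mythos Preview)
The paper does not prove this statement: it is presented precisely as a conjecture, with Theorems~\ref{T:posInd} and~\ref{T:negInd} offered only as supporting evidence. So there is no ``paper's own proof'' to compare your proposal against, and you are right to frame what you wrote as a proposal rather than a proof.

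Your analysis of the situation is accurate. The implication $\MasInd(\srf,p)>0\impl\srf$ not locally polynomially convex is indeed Theorem~\ref{T:posInd}, and you correctly isolate the open direction as the removal of the flatness hypothesis ($\rem$ real-valued) from Theorem~\ref{T:negInd}(1). Your identification of the gap --- that one must upgrade ``the local hull has nontrivial analytic structure accumulating at $p$'' to ``there is a continuous one-parameter family of immersed discs with simple closed boundaries in $\srf\setminus\{p\}$ shrinking to $p$'' --- is exactly the obstruction the paper leaves open, and your remark that local polynomial convexity is not stable under $\smoo^k$ perturbation (so one cannot simply perturb $\rem$ to $\rem_1$) is well taken. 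One caution: the structure theorems you invoke (Bishop, Wermer, Alexander) for hulls of $2$-surfaces typically require either totally-real hypotheses or global compactness/boundary conditions that are unavailable in an arbitrarily small neighbourhood of a CR~singularity, so the step ``$\widehat{\srf}_{U_p}\setminus\srf$ is a $1$-dimensional analytic variety'' is itself not free and would need independent justification here. Apart from that, your outline is a fair summary of what a resolution of the conjecture would have to accomplish.
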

\smallskip

The above conjecture may remind the reader of the findings of J{\"o}ricke \cite{joricke:lphdnipp97}
and  Wiegerinck \cite{wiegerinck:lpchdcrs95} on {\em parabolic} (non-degenerate) CR~singularities.
At least when $\MasInd(\srf,p)\neq 0$ parabolic points have been shown in 
\cite{wiegerinck:lpchdcrs95, joricke:lphdnipp97} to exhibit the conjectured dichotomy. The
question arises as to why the ideas in \cite{wiegerinck:lpchdcrs95, joricke:lphdnipp97}
should not reveal the same dichotomy when applied to non-parabolic, degenerate CR~singularities.
But the fact is, {\em without the condition of thinness, the dichotomy does not hold:} Wiegerinck 
has given examples \cite[Section 4]{wiegerinck:lpchdcrs95} in which $\MasInd(\srf,p)<0$ and 
yet local polynomial convexity at $p$ fails. The CR~singularities 
studied by Wiegerinck are, for the most part, also non-parabolic, degenerate CR~singularities.
But, in place of thinness, they are required to satisfy a different analytical condition. 
The key point of departure of this article from \cite{wiegerinck:lpchdcrs95}
is summarised by these two observations:
\begin{itemize}
\item Although the surfaces $(\srf,p)$ studied in \cite{wiegerinck:lpchdcrs95} are not 
necessarily thin at $p$,\linebreak
{\em Wiegerinck's hypotheses do not hold true in general when $\srf$ is thin at $p$}.
\item Theorems~\ref{T:posInd} and \ref{T:negInd} provide some evidence in support of
Conjecture~\ref{C:iff}. In contrast, there does not seem to be a clear-cut
discriminant for local polynomial convexity if thinness is replaced by the hypotheses 
in \cite{wiegerinck:lpchdcrs95}.
\end{itemize}
It is true that the class of pairs $(\srf,p)$ with $\srf$ being thin at $p$ forms a small 
sub-case of the general situation. However, {\em this paper is devoted to studying a certain
dichotomy.} Wiegerinck's examples suggest that very different considerations must apply when
$(\srf,p)$ is {\em not} thin. These considerations have been examined --- although more from
the viewpoint of detecting polynomial convexity than of polynomial hulls ---  in 
\cite{bharali:sdCRslpc05} and in a recent article \cite{bharali:palpcdCRsII}.
\smallskip
 
As for the assumptions in Wiegerinck's work: we refer the reader to 
\cite[Theorems~3.3, 3.4]{wiegerinck:lpchdcrs95}. His assumptions, applied to our context,
translate to the requirement that $\lead$ must be subharmonic and non-harmonic. One of 
the motivations of this paper is to develop
tools to show the existence of Bishop discs {\em in the absence of such subharmonicity
conditions}. This is a meaningful motivation because of the following:
\smallskip

\noindent{{\bf Fact} (see Example~\ref{Ex:nonSubh}){\bf .} {\em There exist polynomials
$\lead:\cplx\lrarw\rl$, homogeneous of degree $m$, such that 
\begin{itemize}
\item $0$ is an isolated CR~singularity of $\Gamma(\lead)$ satisfying 
$\MasInd(\Gamma(\lead),0)>0$; and
\item $\lead$ is ${\rm not}$ subharmonic.
\end{itemize}}}

\noindent{Example~\ref{Ex:nonSubh} rules out the possibility of simply applying the results of
\cite{wiegerinck:lpchdcrs95} to deduce Theorem~\ref{T:posInd}.}
\smallskip

Before proceeding to the proofs, we would like to point out a couple of new inputs required in 
the proof of Theorem~\ref{T:posInd}, and to sketch the main ingredients of our approach. Our proof
consists of the following parts:
\begin{itemize}
\item {\bf Part~I.} We work in the coordinate system $(z,w)$ centered at $p$ in which 
$(\srf,p)$ is presented locally as shown in \eqref{E:normform2}. We prove a general result:
\[
\MasInd(\Gamma(\lead),0) \ = \ 
-\frac{\text{\#}[\lead(e^{i\bcdot})^{-1}\{0\}\cap[0,2\pi)]}{2}+1
\]
(the notation $\text{\#}[S]$ stands for the cardinality of the set $S$). 
This tells us, since $\MasInd(\srf,p)>0$, that we may assume (after making a holomorphic
change of coordinate if necessary) that $\lead(z)>0 \ \forall z\in\cplx\setminus\{0\}$.

\item {\bf Part~II.} We see that $\lead^{-1}\{1\}$ is a simple closed real-analytic curve. Let
$g$ denote the boundary-value of the normalised Riemann mapping of $\dee$ onto the region enclosed
by $\lead^{-1}\{1\}$. Then, the curves $\varphi_r:\partial\dee\lrarw\CC, \ r>0$, given by
$\zt\longmapsto(rg(\zt),r^m)$ are closed curves in $\Gamma(\lead)$ that bound analytic discs.
We view $\srf$, equivalently the graph $\Gamma(\lead+\rem)$, as a small perturbation of
$\Gamma(\lead)$, and attempt to obtain small corrections, say ${\psi_r}$, 
of $\varphi_r \ \forall r\in(0,R_0)$, for $R_0>0$ sufficiently
small, such that $(\varphi_r+\psi_r)$ are curves in $\Gamma(\lead+\rem)$ that bound analytic discs.
This requirement gives us a family of functional equations,
involving the harmonic-conjugate operator, parametrised
by the interval $(0,R_0)$. The desired $\psi_r, \ r\in(0,R_0)$, are derived from the fixed points 
of these equations.

\item {\bf Part~III.} One way to obtain fixed points is to show that the functionals
involved in the aforementioned equations are contractions. This is the approach of Kenig~\&~Webster 
in \cite{kenigWebster:lhhss2Cv82}. In making the required estimates, Kenig and Webster are aided by
the following remarkable fact:
\begin{itemize}
\item[$(\blacktriangle)$] {\em If, in addition to the hypotheses in
Theorem~\ref{T:posInd}, the polynomial $\lead$ is quadratic, then given any $l\in\nat$, $l\geq 3$,
there exists a holomorphic coordinate system $(z,w)$ such that $(\srf,p)$ has a local representation
of the form \eqref{E:normform2} and such that $\mi(\rem)(z)=O(|z|^{l+1})$.}
\end{itemize}
This fact is good enough to show that the Bishop discs foliate a $\smoo^\infty$-smooth $3$-manifold
with boundary. Unfortunately, {\em the conclusion of $(\blacktriangle)$ is not true in general if
$m>2$}. In the absence of $(\blacktriangle)$, we just make more stringent estimates. These estimates
turn out to be good enough to conclude that $(0,R_0)\ni r\longmapsto(\varphi_r+\psi_r)$ is 
$\smoo^1$-smooth.
\end{itemize}
\smallskip

One final expository remark is in order: one could set up a functional equation of the type 
that we allude to in Part~II above, and naively hope to show that $r\longmapsto\psi_r$ is of
class $\smoo^1$ using the Implicit Function Theorem. The problem is that, owing to the presence of
the CR singularity, the relevant Fr{\'e}chet (partial) derivative of the non-linear functional
involved is {\em non-surjective at all the obvious zeros of this functional!} The reader's 
attention is drawn to the note in Step~2 of Section~\ref{S:posInd}. It is this fact that leads
to the (unavoidable) technicalities of the approach outlined above.
\smallskip

Since an important part of both Theorems~\ref{T:posInd} and~\ref{T:negInd} is based on a good
understanding of $\MasInd(\srf,p)$, we shall begin with a discussion on this index in the next
section. The proofs of Theorems~\ref{T:posInd} and~\ref{T:negInd} will be presented in 
Sections~\ref{S:posInd} and~\ref{S:negInd} respectively. A discussion on the non-subharmonicity
of the local graphing functions of the $(\srf,p)$ that we consider in this paper will be
presented in Section~\ref{S:nonSubh}. 
\medskip

\section{Some facts about the Maslov-type index}\label{S:Maslov} 

Given a smooth real surface $\srf\subset\CC$, the term ``Maslov-type index'' might refer to 
three inter-related numbers that apply to slightly different contexts. They are:
\begin{enumerate}
\item[{a)}] {\em The index $\pathInd{\gamma}(\srf)$ of a closed path:} This applies to a
closed path $\gamma:S^1\lrarw\srf$, where $\srf$ is a totally-real submanifold of a region 
$\OM\subseteq\CC$.
\smallskip

\item[{b)}] {\em The index $\MasInd(\srf,p)$ of a CR~singularity $p$:} This
applies to a pair $(\srf,p)$, where $\srf$ is an orientable real $2$-submanifold of some 
region $\OM\subseteq\CC$ having an isolated CR~singularity at $p\in\srf$.
\smallskip

\item[{c)}] {\em The index $\discInd{\psi}(\srf)$ of an analytic disc 
$\psi$:} This applies to an analytic disc $\psi\in\hol(\dee;\CC)\cap\smoo(\overline\dee;\CC)$
with $\psi(\bdy\dee)\subset\srf$, where $\srf$ is a totally-real submanifold of some region
$\OM\subseteq\CC$.
\end{enumerate} 
\smallskip

\noindent{In this paper, it is the first two indices that will be relevant to our discussions.
Before making the proper definitions, we will need one piece of notation.
We set
\[
\grass(\CC) \ := \ \text{the manifold of oriented totally-real planes in $\CC$},
\]
where the differentiable structure on $\grass(\CC)$ is the one that makes it a 
submanifold of the Grassmanian $G(2,\mathbb{R}^4)$ of oriented $2$-subspaces of
$\mathbb{R}^4$. We are now in a position to make our definitions. In doing so, we follow
the constructions by Forstneri{\v{c}} in 
\cite{forstneric:adbmrsCC87}. Here, we make
one remark: we wish to define the concepts (a) and (b) above with the least amount of
technicality possible, and to draw upon some computations in \cite{forstneric:adbmrsCC87}
that pertain to graphs in $\CC$. Hence, in the definitions
below {\em we will assume that the bundle $\gamma^*\left.T\srf\right|_{\gamma(S^1)}$ is
a trivial bundle} (where $\gamma:S^1\lrarw\srf$ is as in (a)), although the notion of
$\pathInd{\gamma}(\srf)$ is not restricted to the trivial-bundle case.

\begin{definition}\label{D:pathInd}
Let $\srf$ be a totally-real $2$-submanifold of a region $\OM\subseteq\CC$. 
Let $\gamma:S^1\lrarw\srf$ be a smooth, closed path such that the pullback 
$\gamma^*\left.T\srf\right|_{\gamma(S^1)}$ is a 
trivial bundle (equivalently, $\srf$ is orientable along $\gamma$). 
Let $\Theta_\gamma:S^1\lrarw\grass(\CC)$ denote the tangent map, i.e.
$\Theta_\gamma(\zt):=T_{\gamma(\zt)}(\srf)$. There is a well-defined map 
$\mathfrak{G}:\grass(\CC)\lrarw\cplx\setminus\{0\}$ given by
\[
\mathfrak{G}(P):={\rm det}\left[X^P_1 \;\; X^P_2\right], \;\; (X^P_1,X^P_2) \ \text{\em a positively 
oriented orthonormal basis of $P$},
\]
this being well-defined because, given two positively oriented orthonormal bases
$(X^P_1,X^P_2)$ and $(Y^P_1,Y^P_2)$, $Y^P_j=A(X^P_j), \ j=1,2$, for some 
$A\in SL(2,\mathbb{R})$. The composition $\mathfrak{G}\circ\Theta_\gamma$ 
induces a homomorphism in homology $H_1(\mathfrak{G}\circ\Theta_\gamma):H_1(S^1;\zahl)\lrarw 
H_1(\cplx\setminus\{0\};\zahl)$. The degree of this homomorphism is called the 
{\em Maslov-type index of the path $\gamma$}, denoted by $\pathInd{\gamma}(\srf)$. 
\end{definition}  

\begin{definition}\label{D:MasInd}
Let $\srf$ be a real orientable $2$-submanifold of some region $\OM\subseteq\CC$ that has
an isolated CR~singularity at $p\in\srf$. Then there is an $\srf$-open neighbourhood of $p$,
say $W_p$, that is contractible to $p$ and such that $p$ is the only CR~singularity in $W_p$.
Let $W_p$ have the orientation induced by the complex line $T_p(\srf)$. 
Let $\gamma:S^1\lrarw W_p\setminus\{p\}$ be a smooth, simple
closed curve that has positive orientation with respect to the orientation of $W_p$. Then,
we define the {\em Maslov-type index of the CR~singularity $p$}, written as 
$\MasInd(\srf,p)$, by $\MasInd(\srf,p):=\pathInd{\gamma}(W_p\setminus\{p\})$.
\end{definition}
\smallskip

We note that $\MasInd(\srf,p)$ is well-defined because 
$\pathInd{\gamma}(W_p\setminus\{p\})$ depends only on 
the homology class of $\gamma$ in $W_p\setminus\{p\}$. When $\srf$ is the graph $\Gamma(F)$
of some function $F$ that is $\smoo^1$-smooth near $0\in\cplx$, with $\Gamma(F)$ having an
isolated CR singularity at the origin, then $\gamma^*\left.T\srf\right|_{\gamma(S^1)}$ is trivial
for any $\gamma:S^1\lrarw\Gamma(F)\setminus\{(0,0)\}$ as in Definition~\ref{D:MasInd}. Using
an explicit frame for  $\gamma^*\left.T\Gamma(F)\right|_{\gamma(S^1)}$, Forstneri{\v{c}} has
shown that:

\begin{lemma}[\cite{forstneric:adbmrsCC87}, Lemma~8]\label{L:windex}
Let $\OM$ be a domain in $\cplx$ containing $0$ and let $F\in\smoo^1(\OM;\cplx)$. Suppose
that the graph $\Gamma(F)$ has an isolated CR~singularity at $0$. Let 
$\gamma:S^1\lrarw \OM\setminus\{0\}$ be a smooth, positively-oriented, simple closed curve
that encloses $0$ and encloses no other points belonging to $(\partial F/\partial\zbar)^{-1}\{0\}$.
Then
\begin{equation}\label{E:windex}
\MasInd(\Gamma(F),0) \ = \ {\sf Wind}\left(\partl{F}{\zbar}\circ\gamma,0\right),
\end{equation}
where the expression on the right-hand side denotes the winding number around $0$.
\end{lemma}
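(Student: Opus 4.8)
The plan is to unwind Definitions~\ref{D:MasInd} and~\ref{D:pathInd} down to the winding number of one explicit loop, exhibit a convenient tangent frame for $\Gamma(F)$ along $\gamma$, and evaluate a $2\times2$ determinant. For the reduction, let $D_\gamma\subseteq\OM$ be the open region enclosed by $\gamma$; by hypothesis $D_\gamma\cap(\partial F/\partial\zbar)^{-1}\{0\}=\{0\}$, so the graph $\widetilde{D}:=\{(z,F(z)):z\in D_\gamma\}$ is a contractible, $\Gamma(F)$-open neighbourhood of $(0,0)$ whose only CR~singularity is $(0,0)$; hence we may take $W_p:=\widetilde{D}$ in Definition~\ref{D:MasInd} (the index being well-defined independently of such a choice). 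The graphing map $z\longmapsto(z,F(z))$ is a diffeomorphism onto $\widetilde D$; I would check that it is orientation-preserving from $(\OM,\text{standard})$ onto $\widetilde D$ carrying the orientation induced by the complex line $T_0\Gamma(F)$, so that the lifted curve $\zeta\longmapsto(\gamma(\zeta),F(\gamma(\zeta)))$ is positively oriented in $\widetilde D\setminus\{(0,0)\}$. Then Definitions~\ref{D:MasInd}--\ref{D:pathInd} give
\[
\MasInd(\Gamma(F),0)\ =\ {\sf Wind}\big(\mathfrak{G}\circ\Theta_\gamma,\,0\big),\qquad \Theta_\gamma(\zeta):=T_{(\gamma(\zeta),F(\gamma(\zeta)))}\Gamma(F).
\]

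Next I would produce the frame. Pushing the standard frame of $\OM$ forward by the graphing map and writing $z=x+iy$ yields, on the totally-real locus of $\Gamma(F)$, the frame $e_1(z)=(1,F_x(z))$, $e_2(z)=(i,F_y(z))\in\CC$. If $(X_1,X_2)$ is a positively-oriented orthonormal basis of $T_{(z,F(z))}\Gamma(F)$ and $(e_1,e_2)$ is positively oriented as well, then the complex $2\times2$ matrix with columns $e_1,e_2$ equals the one with columns $X_1,X_2$ right-multiplied by some $B\in GL(2,\rl)$ with $\det B>0$; thus $\mathfrak{G}(T_{(z,F(z))}\Gamma(F))$ and $\det[\,e_1\ e_2\,]$ differ, along $\gamma$, by a positive continuous factor, so their winding numbers about $0$ agree. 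To verify that $(e_1,e_2)$ is positively oriented, I would use that the CR~singularity condition at $0$ is precisely $\partial F/\partial\zbar(0)=0$, i.e. $F_y(0)=iF_x(0)$, so $e_2(0)=i\,e_1(0)$ and $(e_1(0),e_2(0))$ is the complex-orientation frame of $T_0\Gamma(F)$. Since $F\in\smoo^1$, $(e_1(z),e_2(z))\to(e_1(0),e_2(0))$ as $z\to0$, and since being positively oriented (relative to the orientation of $\Gamma(F)$ near $0$ induced by $T_0\Gamma(F)$) is an open, locally constant condition on the connected totally-real set $\widetilde D\setminus\{(0,0)\}$ and is satisfied for $z$ near $0$, it is satisfied on all of $\widetilde D\setminus\{(0,0)\}$, in particular along $\gamma$.

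It then remains to compute the determinant of the complex $2\times2$ matrix with columns $e_1=(1,F_x)$ and $e_2=(i,F_y)$, namely $F_y-iF_x=-i(F_x+iF_y)=-2i\,\partial F/\partial\zbar$. Since $\zeta\longmapsto -2i$ is a constant, non-vanishing (hence null-homotopic) loop in $\cplx\setminus\{0\}$, additivity of the winding number under pointwise multiplication yields
\[
\MasInd(\Gamma(F),0)\ =\ {\sf Wind}\big(\mathfrak{G}\circ\Theta_\gamma,\,0\big)\ =\ {\sf Wind}\!\left(\partl{F}{\zbar}\circ\gamma,\ 0\right),
\]
which is the assertion.

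The step I expect to be the main obstacle is the orientation bookkeeping: one must confirm that the ``positive orientation'' of $\gamma$ in the hypothesis (as a curve in $\OM\subseteq\cplx$) agrees with positivity with respect to the orientation on $\widetilde D$ used in Definition~\ref{D:MasInd}, that the graphing map is orientation-compatible with the complex orientation of $T_0\Gamma(F)$, and that the frame $(e_1,e_2)$ carries the correct orientation --- in short, that every sign convention lines up with those of Forstneri{\v{c}} in \cite{forstneric:adbmrsCC87}. With those conventions pinned down, the rest is the elementary determinant computation above.
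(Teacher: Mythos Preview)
Your argument is correct and is precisely the approach the paper alludes to: the paper does not give its own proof of this lemma but simply quotes it from \cite{forstneric:adbmrsCC87}, noting only that Forstneri{\v{c}} obtains it ``using an explicit frame for $\gamma^*\left.T\Gamma(F)\right|_{\gamma(S^1)}$.'' Your frame $e_1=(1,F_x)$, $e_2=(i,F_y)$ is exactly such a frame, the determinant computation $\det[e_1\ e_2]=F_y-iF_x=-2i\,\partial F/\partial\zbar$ is correct, and your orientation check (that $(e_1,e_2)$ is a continuous real frame on all of $\widetilde D$ which at $0$ reduces to $(e_1(0),ie_1(0))$, hence agrees with the complex orientation of $T_0\Gamma(F)$ and therefore with the induced orientation on $\widetilde D$ everywhere) is the right way to pin down the sign.
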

\smallskip

We are now in a position to prove a key lemma. This was informally stated in Part~I of our outline,
in Section~\ref{S:intro}, of the proof of Theorem~\ref{T:posInd}.

\begin{lemma}\label{L:key}
Let $\lead:\cplx\lrarw\rl$ be a polynomial that is homogeneous of degree $m$ and such that 
$(\partial \lead/\partial\zbar)^{-1}\{0\}=\{0\}$. Then
\begin{equation}\label{E:keyWindex}
\MasInd(\Gamma(\lead),0) \ = \ -\frac{\text{\#}[\lead(e^{i\bcdot})^{-1}\{0\}\cap[0,2\pi)]}{2}+1
\end{equation}
(the notation $\text{\#}[S]$ denotes the cardinality of the set $S$).
\end{lemma}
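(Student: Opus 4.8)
The proof rests on Lemma~\ref{L:windex}: since $\lead$ is real-valued, $\MasInd(\Gamma(\lead),0)$ equals the winding number of $\partial\lead/\partial\zbar$ around $0$, computed along a small positively-oriented circle about the origin. First I would reduce to computing this winding number along the unit circle $\zt=\cis{\tht}$, $\tht\in[0,2\pi)$, which is legitimate because the hypothesis $(\partial\lead/\partial\zbar)^{-1}\{0\}=\{0\}$ guarantees the circle encloses no other zeros of $\partial\lead/\partial\zbar$, and because $\partial\lead/\partial\zbar$ is homogeneous of degree $m-1$ (so its winding number is scale-invariant). Next I would write $\lead(z,\zbar)$ on the unit circle as a real trigonometric expression: since $\lead$ is real-valued and $m$-homogeneous, $\lead(\cis{\tht},\cis{-\tht})=:u(\tht)$ is a real-analytic, $2\pi$-periodic function of $\tht$, and it is a real trigonometric polynomial of degree $\leq m$.

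\textbf{Relating the two quantities.} The key computation is to connect $\operatorname{Wind}(\partial\lead/\partial\zbar\circ\gamma,0)$ to the number of zeros of $u$ on $[0,2\pi)$. The idea is that $u(\tht)=\lead(\cis{\tht},\cis{-\tht})$, so differentiating in $\tht$,
\[
u'(\tht) \ = \ i\cis{\tht}\,\partl{\lead}{z}(\cis{\tht},\cis{-\tht}) \ - \ i\cis{-\tht}\,\partl{\lead}{\zbar}(\cis{\tht},\cis{-\tht}).
\]
Because $\lead$ is real-valued, $\partial\lead/\partial z = \overline{\partial\lead/\partial\zbar}$ on all of $\cplx$; writing $q(\tht):=\cis{-\tht}(\partial\lead/\partial\zbar)(\cis{\tht},\cis{-\tht})$ one gets $u'(\tht) = i\bigl(\overline{q(\tht)}-q(\tht)\bigr) = 2\,\mi\, q(\tht)$. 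A parallel computation using the homogeneity (Euler's identity $z\partial_z\lead+\zbar\partial_{\zbar}\lead = m\lead$) gives $\er\, q(\tht) = (m/2)\,u(\tht)$. Hence
\[
q(\tht) \ = \ \frac{m}{2}\,u(\tht) \ + \ \frac{i}{2}\,u'(\tht),
\]
and since $\operatorname{Wind}(\partial\lead/\partial\zbar\circ\gamma,0)=\operatorname{Wind}(q,0)$ (the factor $\cis{-\tht}$ contributes winding $-1$, so I must track that: $\operatorname{Wind}(\partial\lead/\partial\zbar\circ\gamma,0) = \operatorname{Wind}(q,0)+1$), the problem reduces to computing the winding number around $0$ of the planar curve $\tht\longmapsto \bigl(\tfrac{m}{2}u(\tht),\tfrac12 u'(\tht)\bigr)$.

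\textbf{Counting via the curve $(u,u')$.} This last curve is, up to the harmless positive rescaling of the first coordinate, the curve $\tht\mapsto(u(\tht),u'(\tht))$. Its winding number around the origin is a classical quantity: for a $2\pi$-periodic function $u$ with only simple zeros, the curve $(u,u')$ crosses the vertical axis $\{x=0\}$ exactly at the zeros of $u$, and at each such crossing it moves from one half-plane to the other (since $u'\neq 0$ there), always in the same rotational sense — counterclockwise, because when $u=0$ and $u'>0$ the point $(0,u')$ is on the positive $y$-axis and $u$ is increasing, i.e. the curve passes from $\{x<0\}$ to $\{x>0\}$. Thus each zero of $u$ contributes $-\tfrac12$ to the winding number $\operatorname{Wind}((u,u'),0)$ (the orientation works out to make the count negative, matching the sign in \eqref{E:keyWindex}), so $\operatorname{Wind}(q,0) = -\tfrac12\#[u^{-1}\{0\}\cap[0,2\pi)]$, and adding the $+1$ from the $\cis{-\tht}$ factor yields \eqref{E:keyWindex}. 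The one point needing care — and the main obstacle — is justifying that all zeros of $u$ on $[0,2\pi)$ are simple: this is exactly where the hypothesis $(\partial\lead/\partial\zbar)^{-1}\{0\}=\{0\}$ is used. Indeed, if $u(\tht_0)=u'(\tht_0)=0$ then both $\er\,q(\tht_0)=0$ and $\mi\,q(\tht_0)=0$, so $q(\tht_0)=0$, forcing $(\partial\lead/\partial\zbar)(\cis{\tht_0},\cis{-\tht_0})=0$ with $\cis{\tht_0}\neq 0$, contradicting the hypothesis. (In particular $u$ is not identically zero, so $\#[u^{-1}\{0\}\cap[0,2\pi)]$ is finite.) I would then package the half-integer bookkeeping cleanly by noting the zeros of $u$ come in an even number — crossings of the axis must return the curve to its starting half-plane — which is consistent with the right-hand side of \eqref{E:keyWindex} being an integer.
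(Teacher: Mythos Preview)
Your approach is essentially the paper's: both factor $\partial_{\zbar}\lead(e^{i\theta})=\tfrac{e^{i\theta}}{2}\bigl(mu(\theta)+iu'(\theta)\bigr)$ (the paper writes $u=f$), split off the winding contribution $+1$ from $e^{i\theta}$, and then compute the winding of $mu+iu'$; the paper does this last step via an explicit homotopy to a model curve, while you argue directly by counting $y$-axis crossings. One slip: crossing the positive $y$-axis from $\{x<0\}$ to $\{x>0\}$ is \emph{clockwise} motion about $0$, not counterclockwise---but that is precisely what gives the $-\tfrac12$ contribution you correctly record, so your conclusion stands.
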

\begin{proof}
Let us define the real-analytic, $2\pi$-periodic function $f$ by the relation
$\lead(z)=|z|^mf(\theta)$, where we write $z=|z|e^{i\theta}$. Then, we compute 
\begin{equation}\label{E:zbar}
\partl{\lead}{\zbar}(e^{i\theta}) \ = \ 
        \frac{e^{i\theta}}{2}\left\{mf(\theta)+if^\prime(\theta)\right\}.
\end{equation}
We record two facts:
\begin{enumerate}
\item[a)] Since $f\in\smoo^\omega(\rl)$ and $2\pi$-periodic, 
$\text{\#}\{\theta\in[0,2\pi): f(\theta)=0\}$ is an even number.
\item[b)] Since $(\partial\lead/\partial\zbar)^{-1}\{0\}=\{0\}$, $f(\theta)$ and $f^\prime(\theta)$
cannot simultaneously vanish for any $\theta\in[0,2\pi)$.
\end{enumerate}
Thus, we have two
closed paths $\gamma_1,\gamma_2:[0,2\pi]\lrarw\cplx\setminus\{0\}$, defined by:
\begin{align}
\gamma_1(\theta) \ &:= \ mf(\theta)+if^\prime(\theta), \notag \\
\gamma_2(\theta) \ &:= \ e^{i\theta}. \notag
\end{align}
Recalling that the winding number is additive across products, we get:
\[
{\sf Wind}\left(\partl{\lead}{\zbar}(e^{i\bcdot}),0\right) \ = \ 
    {\sf Wind}(\gamma_1,0)+{\sf Wind}(\gamma_2,0).
\]
Hence, in view of the above and Lemma~\ref{L:windex}, it suffices for us to show that
\begin{equation}\label{E:toshow}
{\sf Wind}(\gamma_1,0) \ = \ -\frac{\text{\#}\{\theta\in[0,2\pi):f(\theta)=0\}}{2}.
\end{equation} 

Let us first consider the case when $f^{-1}\{0\}\neq\emptyset$. Without loss of generality, we may
assume that $f(0)=0$. Let
\[
0 = \theta_1 < \theta_1 < \dots\theta_N < 2\pi
\]
denote the distinct zeros of $\left.f\right|_{[0,2\pi)}$. Let $\phi:[0,2\pi]\lrarw\rl$ be a
function having the following properties (recall that by (b) above $f$ has only simple zeros):
\begin{itemize}
\item $\phi(\theta_j)=0, \ j=1,\dots,N$;
\item $\phi^\prime(\theta_j)f^\prime(\theta_j)>0, \ j=1,\dots,N$;
\item $|\phi^\prime(\theta_j)| > |f^\prime(\theta_j)|, \ j=1,\dots,N$;
\item $\left[\left.\phi\right|_{(\theta_{j-1},\theta_j)}\right]^\prime$ has {\em precisely} one
simple zero in $(\theta_{j-1},\theta_j), \ j=1,\dots,N$; and
\item $\phi$ has a $\smoo^\infty$-smooth periodic extension to $\rl$.
\end{itemize}
In view of the third property of $\phi$, there exists a constant $K>0$ such that
\begin{equation}\label{E:deriv}
|\phi^\prime(\theta_j)| \ > \ |f^\prime(\theta_j)| \ > \ K, \ j=1,\dots,N.
\end{equation}
Define the homotopy $H:[0,2\pi]\times[0,1]\lrarw\cplx$ by
\[
H(\theta,t) \ := \ m[(1-t)f(\theta)+t\phi(\theta)]+i[(1-t)f^\prime(\theta)+t\phi^\prime(\theta)].
\]
Note that, by construction 
\begin{align}
\er(H)(\theta,t)=0 \ \iff& \ f(\theta)=0, \notag \\
f(\theta)=0 \ \Longrightarrow& \ |(1-t)f^\prime(\theta)+t\phi^\prime(\theta)|>K. \notag
\end{align}
Hence, in fact, $H([0,2\pi]\times[0,1])\subset\cplx\setminus\{0\}$. Thus $\gamma_1$ is homotopic
in $\cplx\setminus\{0\}$ to the path $\varGamma_1:[0,2\pi]\lrarw\cplx\setminus\{0\}$ given by
\[
\varGamma_1(\theta) \ = \ m\phi(\theta)+i\phi^\prime(\theta), \;\; \theta\in[0,2\pi].
\]
By construction, the number of times that $\varGamma_1$ winds around the origin is 
half the number of times that $\varGamma_1$ intersects the real axis. But, since, by construction,
$\varGamma_1$ is oriented clockwise, we get, by homotopy invariance of the winding number:
\begin{equation}\label{E:almost1}
{\sf Wind}(\gamma_1,0) \ = \ {\sf Wind}(\varGamma_1,0)\ = \
                 -\frac{\text{\#}\{\theta\in[0,2\pi):f(\theta)=0\}}{2}.
\end{equation}
\smallskip

In the case when $f^{-1}\{0\}=\emptyset$, $\gamma_1$ never crosses the real axis. Hence
\begin{equation}\label{E:almost2}
f^{-1}\{0\}=\emptyset \ \Longrightarrow \
{\sf Wind}(\gamma_1,0) \ = \ 0.
\end{equation}
From \eqref{E:almost1} and \eqref{E:almost2} we see that \eqref{E:toshow} has been established.
This establishes our result.
\end{proof}
\smallskip

The last result in this section provides a Maslov-index calculation for the graph of a 
homogeneous polynomial $\lead$ that is, in contrast to Lemma~\ref{L:key}, {\em complex}-valued.
It will find no application later in this paper, but we present it as it might be of independent
interest.

\begin{lemma}\label{L:extra}
Let $\lead$ be a non-holomorphic, complex-valued polynomial that is homogeneous of 
degree $m$ and such that $(\partial \lead/\partial\zbar)^{-1}\{0\}=\{0\}$.
Define the polynomial $\derivF\in\cplx[z,w]$ by the relation
\[
\derivF(z,\zbar) \ = \ \partl{\lead}{\zbar}(z,\zbar)
\]
by making explicit the dependence of $\partial \lead/\partial\zbar$ on $z$ and $\zbar$.
Let $\dellead$ be the polynomial defined as $\dellead(z):=\derivF(z,1)$. Then,
\[
\MasInd(\Gamma(\lead),0) \ = \ 2\left(\sum\left\{\mu(\zt):\zt\in\dellead^{-1}\{0\}\bigcap\dee
                    \right\}\right)-(m-1),
\]
where $\mu(\zt)$ denotes the multiplicity of $\zt$ as a zero of the polynomial $\dellead$.
\end{lemma}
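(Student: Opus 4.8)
The plan is to reduce the computation of $\MasInd(\Gamma(\lead),0)$ to a winding-number count via Lemma~\ref{L:windex}, and then to interpret that winding number using the argument principle applied to the polynomial $\dellead$. First I would take the curve $\gamma(\theta)=e^{i\theta}$, $\theta\in[0,2\pi]$, which by homogeneity of $\lead$ (and hence of $\partial\lead/\partial\zbar$) encloses $0$ and no other zero of $\partial\lead/\partial\zbar$ in $\cplx$, since $(\partial\lead/\partial\zbar)^{-1}\{0\}=\{0\}$ by hypothesis. Then Lemma~\ref{L:windex} gives $\MasInd(\Gamma(\lead),0)={\sf Wind}\bigl(\tfrac{\partial\lead}{\partial\zbar}\circ\gamma,0\bigr)$.

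Next I would make the homogeneity explicit. Writing $\derivF(z,\zbar)$ for $\partial\lead/\partial\zbar$ as a polynomial in $z$ and $\zbar$, the fact that $\lead$ is homogeneous of degree $m$ means $\derivF$ is homogeneous of degree $m-1$, so $\derivF(e^{i\theta},e^{-i\theta})=e^{-i(m-1)\theta}\derivF(e^{2i\theta},1)=e^{-i(m-1)\theta}\dellead(e^{2i\theta})$. Therefore
\[
{\sf Wind}\!\left(\frac{\partial\lead}{\partial\zbar}\circ\gamma,0\right)
= {\sf Wind}\bigl(e^{-i(m-1)\bcdot}\dellead(e^{2i\bcdot}),0\bigr)
= -(m-1) + {\sf Wind}\bigl(\dellead\circ\eta,0\bigr),
\]
where $\eta(\theta)=e^{2i\theta}$ traverses $\bdy\dee$ twice (additivity of winding numbers across products, exactly as in the proof of Lemma~\ref{L:key}). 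Here I should check that $\dellead$ has no zeros on $\bdy\dee$: a zero $\zt_0$ with $|\zt_0|=1$ would yield $\derivF(\zt_0,1)=0$, hence by homogeneity $\derivF(\sqrt{\zt_0},1/\sqrt{\zt_0})\cdot(\text{unit})=0$, contradicting $(\partial\lead/\partial\zbar)^{-1}\{0\}=\{0\}$; I would spell out this scaling carefully since it is where the hypothesis is used.

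Finally, since $\eta$ wraps around $\bdy\dee$ twice with positive orientation and $\dellead$ is holomorphic with no zeros on $\bdy\dee$, the argument principle gives ${\sf Wind}(\dellead\circ\eta,0)=2\sum\{\mu(\zt):\zt\in\dellead^{-1}\{0\}\cap\dee\}$, where $\mu(\zt)$ is the multiplicity. Combining the displays yields
\[
\MasInd(\Gamma(\lead),0)=2\Bigl(\sum\{\mu(\zt):\zt\in\dellead^{-1}\{0\}\cap\dee\}\Bigr)-(m-1),
\]
as claimed. The main obstacle I anticipate is the bookkeeping around the substitution $z=e^{i\theta}\mapsto z^2=e^{2i\theta}$: one must verify that $\derivF(e^{i\theta},e^{-i\theta})$ genuinely factors as $e^{-i(m-1)\theta}\dellead(e^{2i\theta})$ (using $\bar z = 1/z$ on the unit circle and the homogeneity degree $m-1$), and that the doubled traversal of $\bdy\dee$ is correctly accounted for in the winding-number/argument-principle step — otherwise the factor of $2$ and the orientation are easy to get wrong. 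A secondary point requiring care is ruling out zeros of $\dellead$ on $\bdy\dee$ itself, which is exactly the role played by the non-degeneracy hypothesis $(\partial\lead/\partial\zbar)^{-1}\{0\}=\{0\}$.
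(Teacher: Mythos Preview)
Your proposal is correct and follows essentially the same route as the paper: both invoke Lemma~\ref{L:windex}, use the homogeneity of $\derivF$ to write $\partial_{\zbar}\lead(e^{i\theta})=e^{-i(m-1)\theta}\dellead(e^{2i\theta})$ (equivalently $\tfrac{1}{z^{m-1}}\dellead(z^2)$ at $z=e^{i\theta}$), and then apply the argument principle. The only cosmetic difference is that the paper expresses the winding number as a logarithmic-derivative integral before invoking the argument principle, whereas you use the additivity of winding numbers directly.
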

\begin{proof}
Note that, by hypothesis, the path $(\partial \lead/\partial\zbar)(e^{i\bcdot})$ does not
pass through the origin. Hence, in view of \eqref{E:windex}, we can explicitly compute
the desired winding number to get:
\begin{equation}\label{E:subst}
\MasInd(\Gamma(\lead),0) \ = \
\frac{1}{2\pi i}\int_0^{2\pi}\frac{\bdy^2_{z\zbar}\lead(e^{i\theta})ie^{i\theta}-
        \bdy^2_{\zbar\zbar}\lead(e^{i\theta})ie^{-i\theta}}{\bdy_{\zbar}\lead(e^{i\theta})}d\theta.
\end{equation}
We now compute that
\begin{align}
\bdy^2_{z\zbar}\lead(e^{i\theta})ie^{i\theta}-
\bdy^2_{\zbar\zbar}\lead(e^{i\theta})ie^{-i\theta} \ &= \
ie^{i\theta}\left[\frac{1}{z^{m-1}}\derivF(z^2,1)\right]^\prime_{z=e^{i\theta}}, \label{E:calc1} \\
\bdy_{\zbar}\lead(e^{i\theta}) \ &= \
\left.\frac{1}{z^{m-1}}\derivF(z^2,1)\right|_{z=e^{i\theta}}. \label{E:calc2}
\end{align}
From \eqref{E:subst}, \eqref{E:calc1} and \eqref{E:calc2}, we get
\[
\MasInd(\Gamma(\lead),0) \ = \
\frac{1}{2\pi i}\oint_{S^1}\frac{\left[\frac{1}{z^{m-1}}\dellead(z^2)\right]^\prime_{z=\zt}}
        {\frac{1}{\zt^{m-1}}\dellead(\zt^2)}d\zt.
\]
Since, by hypothesis, the denominator in the above integral never vanishes, the Argument Principle
gives us
\[
\MasInd(\Gamma(\lead),0) \ = \ 2\left(\sum\left\{\mu(\zt):\zt\in\dellead^{-1}\{0\}\bigcap\dee
                \right\}\right)-(m-1).
\]
\end{proof}
\medskip

\section{The proof of Theorem~\ref{T:posInd}}\label{S:posInd}

We introduce some notations that will be needed in the proof of Theorem~\ref{T:posInd}.
First, we define the Banach space $\smoo^\alpha(\bdy\dee;\mathbb{F}), \ \alpha\in(0,1)$, where 
$\mathbb{F}$ will stand for either $\rl$ or $\cplx$ in the following proof, as
\[
\smoo^\alpha(\bdy\dee;\mathbb{F}):=\left\{f:\bdy\dee\lrarw\mathbb{F}:\sup_{\theta\in\rl}|f(\cis{\theta})|
+\sup_{\theta\neq\phi\in\rl}\frac{|f(\cis{\theta})-f(\cis{\phi})|}{|\theta-\phi|^\alpha}<\infty\right\},
\]
where the norm on this Banach space is:
\[
\hnrm{f} \ := \
\sup_{\theta\in\rl}|f(\cis{\theta})|
+\sup_{\theta\neq\phi\in\rl}\frac{|f(\cis{\theta})-f(\cis{\phi})|}{|\theta-\phi|^\alpha}.
\]
We will also have occasion to use the following abbreviation
\[
\hlpart{f} \ := \ 
\sup_{\theta\neq\phi\in\rl}\frac{|f(\cis{\theta})-f(\cis{\phi})|}{|\theta-\phi|^\alpha}.
\]
In what follows, $A(\bdy\dee)$ will denote the class of restrictions to the unit circle of
functions that are holomorphic on $\dee$ and continuous on $\overline{\dee}$. 
For any $f\in\smoo(\bdy\dee;\mathbb{F})$ we will denote the Fourier series of $f$ as follows:
\[
f \ \thicksim \ \sum_{n\in\zahl}\widehat{f}(n)\cis{n\theta}.
\]
It is well known that if $f\in\smoo^\alpha(\bdy\dee;\mathbb{F})$ with $\alpha\in(0,1)$,
then any harmonic conjugate on $\dee$ of the Poisson integral of $f$ extends to a function
on $\overline{\dee}$ and its restriction to $\bdy\dee$, say $h_f$, is of class
$\smoo^\alpha(\bdy\dee;\mathbb{F})$. In this paper, $\cnj[f]$ will denote that $h_f$
which satisfies (in our Fourier-series notation) $\widehat{h_f}(0)=0$.
In terms of Fourier series:  
\[
\cnj[f] \ \thicksim \
\sum_{n\in\zahl}-i \ {\rm sgn}(n)\widehat{f}(n)\cis{n\theta}.
\]
We call $\cnj[f]$ the {\em conjugate of $f$}. Recall that the operator 
$\cnj:\smoo^\alpha(\bdy\dee;\mathbb{F})\lrarw\smoo^\alpha(\bdy\dee;\mathbb{F})$
is a certain singular-integral operator that is bounded on $\smoo^\alpha(\bdy\dee;\mathbb{F})$.
We shall use this fact (which we assume the reader is familiar with) in Step~1 of our proof below.
\medskip

\noindent{{\bf The proof of Theorem~\ref{T:posInd}.} Let $(\srf,p)$ be as stated in the hypothesis 
of the theorem. By definition, there is a neighbourhood $U_p$ of $p$ and
holomorphic coordinates $(z,w)$ centered at $p$ such 
that $\srf$ is locally defined by
\begin{equation}\label{E:normform2*}
\srf\cap U_p: \quad w \ = \ \lead(z) + \rem(z) \quad(\text{for $|z|$ small}),
\end{equation}
where $\lead$ is a real-valued polynomial that is homogeneous of degree $m$, and
$\rem(z)=O(|z|^{m+1})$. From this last fact, and from \eqref{E:windex} in 
Lemma~\ref{L:windex}, we see that
\[
\MasInd(\Gamma(\lead+\rem),0) \ = \ \MasInd(\Gamma(\lead),0).
\]
This is seen by considering the relevant winding numbers of small circles centered at $z=0$.
Now note that the index $\MasInd(\srf,p)$ is, by construction, invariant under holomorphic 
changes of coordinate. Hence
\[
\MasInd(\Gamma(\lead),0) \ = \ \MasInd(\Gamma(\lead+\rem),0) \ = \ \MasInd(\srf,p) \ > \ 0.
\]
Applying \eqref{E:keyWindex} to the above statement, we may conclude, without loss of 
generality, that
\begin{equation}\label{E:posi}
\lead(z) \ > \ 0 \;\; \forall z\in\cplx\setminus\{0\}.
\end{equation}}

Let us define
\[
\rho \ := \ \sup\{s>0:\Gamma(\lead+\rem; \overline{D(0;s)})\subset U_p, \ \text{and} \ 
            \overline{D(0;s)}\times\{0\}\subset U_p\},
\]
(here, and elsewhere in this paper, $\Gamma(\lead+\rem; \overline{D(0;s)})$ denotes the 
portion of the graph of $(\lead+\rem)$ over the closed disc $\overline{D(0;s)}$). In the remainder of this
proof, {\em whenever we use the parameter $r>0$, we will assume that $0<r<3\rho/4$}. In view of
\eqref{E:posi}, and the fact that $\lead$ is homogeneous, $\lead^{-1}\{1\}$ is a real-analytic curve
that meets each ray originating at $0$ at precisely one point. To see this, we first note that, by
homogeneity, for each fixed $\tht\in [0,2\pi)$, $\lead(r\cis{\tht})=C_\tht r^m \ \forall r>0$. By
\eqref{E:posi}, $C_\tht>0$, whence the ray $\{r\cis{\tht}:r>0\}$ intersects $\lead^{-1}\{1\}$ precisely
at $\cis{\tht}/C_\tht^{1/m}$. It now follows from basic topology that $\lead^{-1}\{1\}$ is a   
simple closed curve that encloses $0$. Thus, we can define
\begin{align}
\dom \ &:= \ \text{the region in $\cplx$ enclosed by $\lead^{-1}\{1\}$}, \notag \\
G \ &:= \ \text{the unique Riemann mapping of $\dee$ onto $\dom$ such that $G(0)=0$, $G^\prime(0)>0$}. \notag
\end{align}
Note that as $\lead^{-1}\{1\}$ is a real-analytic, simple closed curve:
\begin{itemize}
\item $G$ extends to a homeomorphism between $\overline{\dee}$ and $\overline{\dom}$ such that
$G:(\dee,\bdy\dee)\lrarw (\dom,\bdy\dom)$; and
\item By the Schwarz Reflection Principle, $\exists\eps>0$ such that $G$ extends to a function 
$\widetilde{G}\in\hol(D(0;1+\eps))$ 
such that $\widetilde{G}^\prime(\zt)\neq 0 \ \forall\zt\in\bdy\dee$.
\end{itemize}
Let us define $g:=\left.\widetilde{G}\right|_{\bdy\dee}$ and $g_r:=\kappa rg$, where 
$\kappa:=(1/2)\left[\sup_{\zt\in\bdy\dee}|g(\zt)|\right]^{-1}$.  
\smallskip

\noindent{{\bf Step~1.} {\em Constructing the relevant Bishop's Equation}}

\noindent{Let us fix an $\alpha\in(0,1)$. Define the mapping 
$\bish:\smoo^\alpha(\bdy\dee;\rl)\lrarw\smoo^\alpha(\bdy\dee;\cplx)\cap A(\bdy\dee)$ by
\[
\bish[\psi] \ := \ \psi+i\cnj[\psi].
\]  
Recall that $\cnj[\psi]$ denotes the conjugate of $\psi$}. It is well-known that for each
$\alpha\in(0,1)$, there exists a $\gamma_\alpha>0$ such that
\begin{equation}\label{E:hlreg}
\hnrm{\cnj[\psi]} \ \leq \ \gamma_\alpha\hnrm{\psi} \ \forall\psi\in\smoo^\alpha(\bdy\dee;\rl).
\end{equation}
We remark here that
the optimal dependence of $\gamma_\alpha$ on the parameter $\alpha\in (0,1)$ is known; see 
\cite[Part~IV, Theorem~2.4]{merkerPorten:heCRf06}. Specifically, if $\psi$ is in the 
H{\"o}lder class $\smoo^{k,\alpha}(\bdy\dee;\mathbb{F})$ then
\[
 \|\cnj[\psi]\|_{\smoo^{k,\alpha}} \ \leq \ C(k)\tfrac{1}{\alpha(1-\alpha)}\|\psi\|_{\smoo^{k,\alpha}},
\]
where $C(k)>0$ denotes a constant that depends only on $k$. However, we shall not require this degree
of precision in the arguments that follow and we shall work with $\gamma_\alpha>0$.
Define the open set $\OM_\alpha\subset\smoo^{\alpha}(\bdy\dee;\rl)$ by
\[
\OM_\alpha \ := \ \{\psi\in\smoo^{\alpha}(\bdy\dee;\rl):\sqrt{1+\gamma^2_\alpha}\hnrm{\psi}<3\rho/8\}.
\]
Finally, define the function $\Phi:\OM_\alpha\times(0,3\rho/4)\lrarw\smoo^{\alpha}(\bdy\dee;\rl)$ by
\begin{align}
\Phi(\psi,r) :=& -(\kappa r)^m
    +\lead\circ(g_r+\cis{\bcdot}\bish[\psi])+(\er\rem)\circ(g_r+\cis{\bcdot}\bish[\psi]) \notag \\
    &\qquad\quad +\cnj[(\mi\rem)\circ(g_r+\cis{\bcdot}\bish[\psi])] \notag \\
=& \bdy_z\lead(g_r)\cis{\bcdot}\bish[\psi]+\bdy_{\zbar}\lead(g_r)\overline{\cis{\bcdot}\bish[\psi]}
    +Q(g_r,\cis{\bcdot}\bish[\psi]) \notag \\
    &\qquad\quad+(\er\rem)\circ(g_r+\cis{\bcdot}\bish[\psi])
    +\cnj[(\mi\rem)\circ(g_r+\cis{\bcdot}\bish[\psi])] \label{E:simple}
\end{align}
where we define
\[
Q(X,Y) \ := \ 
\sum_{j=2}^m\sum_{\mu+\nu=j}\frac{1}{\mu!\nu!}\bdy^\mu_z\bdy^\nu_{\zbar}\lead(X)Y^\mu\overline{Y}^\nu.
\]
We are now in a position to assert the following:}
\smallskip

\noindent{{\bf Fact~A.} {\em If, for some $(\psi_0,r^0)\in\OM_\alpha\times(0,3\rho/4)$, 
$\Phi(\psi_0,r^0)=0$, then there is an analytic disc $F\in\hol(\dee;\CC)\cap\smoo^\alpha(\overline{\dee})$,
which is a small perturbation of the analytic disc $(g_{r^0},(\kappa r^0)^m)$, such that 
$F(\bdy\dee)\subset\srf$.}}

\noindent{To justify the above assertion, note that as $\Phi(\psi_0,r^0)$ is identically zero,
\[
\Phi(\psi_0,r^0)+i\bish[(\mi\rem)\circ(g_{r^0}+\cis{\bcdot}\bish[\psi_0])]
\]
is the boundary value of a holomorphic function. However
\begin{multline}\label{E:bdyFit}
\Phi(\psi_0,r^0)+i\bish[(\mi\rem)\circ(g_{r^0}+\cis{\bcdot}\bish[\psi_0])] \\
= -(\kappa r^0)^m
    +\lead\circ(g_{r^0}+\cis{\bcdot}\bish[\psi_0])+\rem\circ(g_{r^0}+\cis{\bcdot}\bish[\psi_0]).
\end{multline}
Clearly, the Poisson integral of the function
\[
(g_{r^0},(\kappa r^0)^m)+
\left(\cis{\bcdot}\bish[\psi_0],\,i\bish[(\mi\rem)\circ(\cis{\bcdot}\bish[\psi_0]+g_{r^0})]\right)
\]
is an analytic disc $F:=(F_1,F_2)$, and by \eqref{E:bdyFit}
\[
F_2(\zt) \ = \ \lead\circ F_1(\zt) + \rem\circ F_1(\zt) \;\; \forall\zeta\in\bdy\dee,
\]
which is precisely the fact asserted above.}
\smallskip

To show that the analytic discs described in Theorem~\ref{T:posInd} vary smoothly with respect to the
parameter $r$, we have to establish that each of these discs exists. To this end, the above discussion
helps in setting the following
\smallskip

\noindent{{\bf Intermediate Goal.} {\em To solve the equation $\Phi(\psi,r)=0$ for all sufficiently
small values of the parameter $r$.}}
\smallskip

\noindent{{\bf Step~2.} {\em Setting up an equivalent equation to the functional equation $\Phi(\psi,r)=0$}}

\noindent{Consider the linear operator (which is bounded from $\smoo^\alpha(\bdy\dee;\rl)$ to 
$\smoo^\alpha(\bdy\dee;\rl)$ owing to \eqref{E:hlreg} above)
\begin{align}
\Lambda_r:\psi\longmapsto & \
\bdy_z\lead(g_r)\cis{\bcdot}\bish[\psi]+\bdy_{\zbar}\lead(g_r)\overline{\cis{\bcdot}\bish[\psi]} \notag \\
    =& \ 2\er\left\{\bdy_z\lead(g_r)\cis{\bcdot}\bish[\psi]\right\}. \notag
\end{align}}

\noindent{{\bf {\em Note.}} Before we engage in technicalities, we ought to point out the difficulties
inherent in this problem. First note that:
\[
\text{\em The Fr{\'e}chet (partial)
derivative $\left.\partial_\psi\Phi\right|_{(\psi,0)}$ is not invertible for any $\psi\in\OM_\alpha$.}
\]
Suppose that could show that the Fr{\'e}chet derivative 
$\left.\partial_\psi\Phi\right|_{(\psi^0,r^0)}$ is invertible for some 
$(\psi^0,r^0)\in\OM_\alpha\times(0,3\rho/4)={\sf Dom}(\Phi)$. With this, {\em we would still be
unable to invoke the Implicit Function Theorem to either assert the existence of analytic discs 
attached to $\srf$ or to infer their smooth dependence on $r$ in a neighbourhood of $r_0$.} This
is because it must first be established that
$\Phi(\psi^0,r^0)=0$! This is precisely our Intermediate Goal above. The inquisitive reader is 
directed also to Remark~\ref{R:Forst} below.}
\smallskip

\noindent{{\bf Claim.} {\em $\Lambda_r$ is an isomorphism.}}

\noindent{To show that $\Lambda_r$ is surjective, note that it suffices to show that given any
$f\in\smoo^{\alpha}(\bdy\dee;\rl)$, there exists a function
$a_f\in A_0^\alpha(\bdy\dee)$, where
\[
A_0^\alpha(\bdy\dee) \ := \ \{h\in\smoo^{\alpha}(\bdy\dee;\cplx):\widehat{h}(0)\in\rl, \ 
            \text{and} \ \widehat{h}(j)=0 \ \forall j\leq-1\},
\]
such that $2\er\left\{\bdy_z\lead(g_r)\cis{\bcdot}a_f\right\}=f$. Note that, from the discussion
preceding Step~2, it can be inferred that
\[
\lead\circ(r\kappa\widetilde{G}) - (\kappa r)^m \ \text{\em vanishes on $\bdy\dee$}.
\]
Thus, recalling that $\widetilde{G}^\prime(\zt)\neq 0 \ \forall\zt\in\bdy\dee$, there exists a 
$\delta>0$ and a function $R\in\smoo^\omega({\rm Ann}(0;1-\delta,1+\delta))$ such that (we
treat $r$ as a parameter here)
\begin{itemize}
\item $R(z)>0 \ \forall z\in{\rm Ann}(0;1-\delta,1+\delta)$; and
\item $\lead\circ(r\kappa\widetilde{G})-(\kappa r)^m=r^mR(z)(|z|^2-1) \ 
\forall z\in{\rm Ann}(0;1-\delta,1+\delta)$.
\end{itemize}
By the chain rule (recall that $g_r$ is the restriction of a holomorphic function):
\begin{align}
\cis{\bcdot}(\bdy_z\lead)\circ(g_r) \ &= \ 
\left.\partl{(\lead\circ(\kappa r\widetilde{G})-(\kappa r)^m)}{z}\right|_{\bdy\dee}\times
\frac{\cis{\bcdot}}{\kappa r\widetilde{G}^\prime} \label{E:trick} \\
&= \ r^m\left(\left.\zbar R\right|_{\bdy\dee}\right)\frac{\cis{\bcdot}}{\kappa r\widetilde{G}^\prime}.
\end{align}
The second equality follows from the fact that $(|z|^2-1)\bdy_zR(z)$ vanishes on $\bdy\dee$. Hence,
the desired $a_f$ is a solution to the equation
\begin{equation}\label{E:matching}
2\frac{r^{m-1}R(\cis{\bcdot})}{\kappa}\er\left(\frac{a_f}{\widetilde{G}^\prime}\right) \ = \ f \;\;
\text{\em with $a_f\in A_0^\alpha(\bdy\dee)$}.
\end{equation}
It was shown by Privalov that -- owing to the normalisation 
condition that $a_f$ belong to $A_0^\alpha(\bdy\dee)$ -- the equation \eqref{E:matching}
has a {\em unique} solution in $A_0^\alpha(\bdy\dee)$ given by
\[
a_f(\zt) \ = \ 
\frac{\kappa\widetilde{G}^\prime(\zt)}{2r^{m-1}} \ \bish\left[\frac{f}{R(\cis{\bcdot})}\right](\zt) \;\;
\forall\zt\in\bdy\dee.
\]
This establishes that $\Lambda_r$ is surjective, and the uniqueness of $a_f$ establishes that
it is injective. Hence the claim.}
\smallskip

To complete the discussion on the invertibility of $\Lambda_r$ we note that 
$\Lambda_r^{-1}=\invrt_r$, where
\[
\invrt_r[f] \ = \ 
\er\left\{\frac{\kappa\widetilde{G}^\prime(\cis{\bcdot})}{2r^{m-1}} \ 
\bish\left[\frac{f}{R(\cis{\bcdot})}\right]
\right\}.
\]
Furthermore, from the fact that $\bish=\mathbb{I}_{\smoo^\alpha}+i\cnj$, and from the estimate
\eqref{E:hlreg}, we get the following important estimate: there exists a $K_\alpha>0$ such that
\begin{equation}\label{E:invrtreg}
\hnrm{\invrt_r[f]} \ \leq \ K_\alpha r^{1-m}\hnrm{f} \ \forall f\in\smoo^\alpha(\bdy\dee;\rl).
\end{equation} 

Finally, by applying $\invrt_r$ to the equation \eqref{E:simple}, we see that solving the equation
\[
\Phi(\psi,r) \ = \ 0, \;\; (\psi,r)\in\OM_\alpha\times(0,3\rho/4)
\]
is equivalent to solving
\begin{align}
\psi+&\invrt_r\left[Q(g_r,\cis{\bcdot}\bish[\psi])+(\er\rem)\circ(\cis{\bcdot}\bish[\psi]+g_r)
    +\cnj[(\mi\rem)\circ(\cis{\bcdot}\bish[\psi]+g_r)]\right] \notag \\ 
    &\equiv \ \psi-H(\psi;r) \notag \\
    &= \ 0. \notag
\end{align}  
In view of this, the goal presented at the end of Step~1 is modified as follows:
\smallskip

\noindent{{\bf Modified Intermediate Goal.} {\em To find a fixed point of the map 
$H(\bcdot;r):\OM_\alpha\lrarw\smoo^\alpha(\bdy\dee;\rl)$ for each sufficiently small     
value of the parameter $r$.}}
\medskip

\noindent{{\bf Step~3.} {\em Some estimates}}

\noindent{We shall use the contraction mapping principle to establish the modified goal above. 
For this purpose, we will (for a fixed $r>0$) determine the image under $H(\bcdot;r)$
of a small closed ball in $\smoo^\alpha(\bdy\dee;\rl)$ centered at $0$. We will also show that
$H(\bcdot;r)$ is a contraction on this ball. This requires some estimates.
\smallskip

Since $\rem(z)=O(|z|^{m+1})$, it follows that there is a large positive 
constant $L>0$ that is independent of $r>0$ such that
\begin{equation}\label{E:estReRem_u}
\uninrm{(\er\rem)\circ g_r} \ \leq \ L\left(\frac{r}{\rho}\right)^{m+1},
\end{equation}
and
\begin{equation}\label{E:estReRem_hlpart}
\hlpart{(\er\rem)\circ g_r} \ \leq \ L\left(\frac{r}{\rho}\right)^{m+1}\|\er\rem\|_{\smoo^1}
    \kappa\rho\sup_{\theta\neq\phi\in\rl}
    \frac{|g(\cis{\theta})-g(\cis{\phi})|}{|\theta-\phi|^\alpha}.
\end{equation}
\smallskip

We now set the stage for showing that for each $r>0$ sufficiently small, 
$H(\bcdot;r)$ is a contraction on the closed ball 
$\overline{\mathbb{B}_{\smoo^\alpha}(0;r^{1+\delta})}$, where 
we pick and fix $\delta\in(1/2,1)$. Furthermore, we shall work with $r\in(0,r_1)$, 
where $r_1>0$ is so small that 
$r/(100\sqrt{1+\gamma^2_\alpha})\geq r^{1+\delta} \ \forall r\in(0,r_1)$.
This will ensure that all values of $\psi$ under consideration satisfy the constraint
\begin{equation}\label{E:constraint}
\hnrm{\psi} \ \leq \ \frac{r}{100\sqrt{1+\gamma^2_\alpha}}.
\end{equation}
In the next few estimates, {\em we shall assume that these constraints are in effect even if
not explicitly stated.} To simplify notation we set
$A(\mu,\nu,\psi):=\bish[\psi]^\mu\overline{\bish[\psi}]^\nu$.
We first estimate:
\begin{align}
&\uninrm{Q(g_r,\cis{\bcdot}\bish[\psi_1])-Q(g_r,\cis{\bcdot}\bish[\psi_2])} \notag \\
&\quad \leq \ \sum_{j=2}^m\sum_{\mu+\nu=j}\frac{r^{m-j}}{\mu!\nu!}\uninrm{
\bdy^\mu_z\bdy^\nu_{\zbar}\lead(\kappa g)\left(A(\mu,\nu,\psi_1)-A(\mu,\nu,\psi_2)\right)}
\notag \\
&\quad \leq \ \sum_{j=2}^m\sum_{\mu+\nu=j}\frac{r^{m-j}}{\mu!\nu!}
\sup_{\zt\in\overline{\dee}}\left|\bdy^\mu_z\bdy^\nu_{\zbar}\lead(\zt)\right| \notag \\
&\qquad\qquad\qquad\quad \ \times\sqrt{1+\gamma^2_\alpha}^{j-1}\uninrm{\bish[\psi_1-\psi_2]}
\left(\mu r^{(1+\delta)(j-1)}+\nu r^{(1+\delta)(j-1)}\right) \notag \\
&\quad \leq \ C_\alpha\sum_{j=2}^m jr^{(m-1)+\delta(j-1)}\hnrm{\psi_1-\psi_2}. \label{E:estQu-diff}
\end{align}}
It is the bound $\hnrm{\psi_j}\leq r^{(1+\delta)}, \ j=1,2$, that leads to the second
inequality above. Next, we estimate, using the fundamental theorem of calculus:
\begin{align}
&\uninrm{(\er\rem)\circ(g_r+\cis{\bcdot}\bish[\psi_1])-(\er\rem)\circ(g_r+\cis{\bcdot}\bish[\psi_2])} 
\notag \\
&\leq \ 2\sup_{\theta\in\rl}\left|\er\left\{\int_0^1\bdy_z(\er\rem)(g_r(\cis{\theta})+\cis{\theta}
    (t\bish[\psi_1]+(1-t)\bish[\psi_2])(\cis{\theta}))dt\right\}\right|
    \uninrm{\bish[\psi_1-\psi_2]}. \notag
\end{align}
Since $\rem(z)=O(|z|^{m+1})$, and since the constraint \eqref{E:constraint} ensures that
\[ 
{\rm range}(g_r+t\cis{\bcdot}\bish[\psi_1]+(1-t)\cis{\bcdot}\bish[\psi_2]) \ \Subset 
\ {\sf dil}_r[{\rm domain}(\rem)] \;\; \forall t\in[0,1]
\] 
(where ${\sf dil}_r$ denotes the dilation on $\cplx$ by a factor of $r$), there is a large positive 
constant $L>0$ that is independent of $r>0$ such that
\begin{multline}\label{E:estReRemu-diff}
\uninrm{(\er\rem)\circ(g_r+\cis{\bcdot}\bish[\psi_1])-(\er\rem)\circ(g_r+\cis{\bcdot}\bish[\psi_2])} \\
\leq \ L\sqrt{1+\gamma^2_\alpha}\hnrm{\psi_1-\psi_2}\left(\frac{r}{\rho}\right)^{m}.
\end{multline}
\smallskip

To simplify the presentation of our next estimate, let us set
\[
C(j,\alpha) \ := \ \hnrm{\cis{j\bcdot}}, \;\; j\in\zahl, \quad
M_\alpha \ := \ \max\left(\hnrm{\bish[\psi_1]},\hnrm{\bish[\psi_2]}\right),
\]
and write $j=\mu+\nu$, $2\leq j\leq m$. Note that:
\begin{align}
&\bdy^\mu_z\bdy^\nu_{\zbar}\lead(g_r)A(\mu,\nu,\psi_1)-
        \bdy^\mu_z\bdy^\nu_{\zbar}\lead(g_r)A(\mu,\nu,\psi_2) \notag \\
&= \ r^{m-j}\bdy^\mu_z\bdy^\nu_{\zbar}\lead(\kappa g)\cis{(\mu-\nu)\bcdot} \notag \\
&\quad\times\left( \ \overline{\bish[\psi_1-}\psi_2]\right.
    \sum_{t=0}^{\nu-1}\overline{\bish[\psi_1}]^tA(\mu,\nu-t-1,\psi_2)
    +\bish[\psi_1-\psi_2]
    \sum_{s=0}^{\mu-1}\left. A(s,\nu,\psi_1)\bish[\psi_2]^{\mu-s-1}\right). \notag
\end{align}
Then, it is easy to see that
\begin{align}
&\hlpart{\bdy^\mu_z\bdy^\nu_{\zbar}\lead(g_r)A(\mu,\nu,\psi_1)-
    \bdy^\mu_z\bdy^\nu_{\zbar}\lead(g_r)A(\mu,\nu,\psi_2)} \notag \\
&\leq \ C_\alpha r^{m-j}\smoonrm{\lead}{j+1}\kappa\sup_{\theta\neq\phi\in\rl}
\frac{|g(\cis{\theta})-g(\cis{\phi})|}{|\theta-\phi|^\alpha}jM_\alpha^{j-1}
\uninrm{\bish[\psi_1-\psi_2]}\notag \\
&\quad +C_\alpha r^{m-j}
    \smoonrm{\lead}{j}jM_\alpha^{j-1}\left(\uninrm{\bish[\psi_1-\psi_2]}(C(\mu-\nu,\alpha)+(j-1))
    +\hnrm{\bish[\psi_1-\psi_2]}\right) \notag
\end{align}
From this estimate, and the fact that $0\leq M_\alpha\leq \sqrt{1+\gamma^2_\alpha}r^{1+\delta}$,
we conclude that there exists a constant $C_\alpha>0$, depending only on $\alpha$, such that
\begin{equation}\label{E:estQhlpart-diff}
\hlpart{Q(g_r,\cis{\bcdot}\bish[\psi_1])-Q(g_r,\cis{\bcdot}\bish[\psi_2])} \ \leq
\ C_\alpha\sum_{j=2}^m r^{(m-1)+\delta(j-1)}\hnrm{\psi_1-\psi_2}.  
\end{equation}
\smallskip

Finally, using exactly the same technique that led to the estimate \eqref{E:estReRemu-diff},
we compute:
\begin{align}
&\hlpart{(\er\rem)\circ(g_r+\cis{\bcdot}\bish[\psi_1])-(\er\rem)\circ(g_r+\cis{\bcdot}\bish[\psi_2])} 
\notag \\
&\leq \ 2\sup_{\theta\in\rl}\left|\er\left\{\int_0^1\bdy_z(\er\rem)(g_r(\cis{\theta})+\cis{\theta}
    (t\bish[\psi_1]+(1-t)\bish[\psi_2])(\cis{\theta}))dt\right\}\right|
    \hnrm{\bish[\psi_1-\psi_2]} \notag \\
&\quad\qquad + L\left(\frac{r}{\rho}\right)^{m}\uninrm{\bish[\psi_1-\psi_2]}
    \int_0^1\rho\hlpart{\kappa g +\frac{\cis{\bcdot}(t\bish[\psi_1]+(1-t)\bish[\psi_2])}{r}}dt. \notag
\end{align}
Arguing in an analogous manner as above, we conclude that there exists a large uniform constant
$L>0$ and a $C_\alpha>0$, depending only on $\alpha$, such that: 
\begin{multline}\label{E:estReRemhlpart-diff}
\hlpart{(\er\rem)\circ(g_r+\cis{\bcdot}\bish[\psi_1])-(\er\rem)\circ(g_r+\cis{\bcdot}\bish[\psi_2])} \\
\leq \ 2L\left(\frac{r}{\rho}\right)^{m}\left(\sqrt{1+\gamma^2_\alpha}+C_\alpha r^{\delta}\right)
\hnrm{\psi_1-\psi_2}.
\end{multline}
\smallskip

We are now in a position to write down three key estimates that we need. In each of the
three estimates, there exists a constant $C_\alpha>0$ that depends only on $\alpha$ such that
the following inequalities hold. Firstly, from \eqref{E:estQu-diff}
and \eqref{E:estQhlpart-diff} we get
\begin{equation}\label{E:estQ}
\hnrm{Q(g_r,\cis{\bcdot}\bish[\psi_1])-Q(g_r,\cis{\bcdot}\bish[\psi_2])} \leq \ 
C_\alpha\sum_{j=2}^mr^{(m-1)+\delta(j-1)}\hnrm{\psi_1-\psi_2}.
\end{equation}
Next, from \eqref{E:estReRemu-diff} and \eqref{E:estReRemhlpart-diff}, we get 
\begin{equation}\label{E:estReRem}
\hnrm{(\er\rem)\circ(g_r+\cis{\bcdot}\bish[\psi_1])-(\er\rem)\circ(g_r+\cis{\bcdot}\bish[\psi_2])} \ 
\leq \ C_\alpha(1+r^\delta)r^m\hnrm{\psi_1-\psi_2}.
\end{equation}
Finally, note that the same arguments that
lead to \eqref{E:estReRemu-diff} and \eqref{E:estReRemhlpart-diff} also yield {\em exactly} analogous
estimates for $(\mi\rem)\circ(g_r+\cis{\bcdot}\bish[\psi])$. This observation, coupled with the bound
\eqref{E:hlreg} for the operator $\cnj$ gives us   
\begin{multline}\label{E:estImRem}
\hnrm{\cnj\left[(\mi\rem)\circ(g_r+\cis{\bcdot}\bish[\psi_1])-
        (\mi\rem)\circ(g_r+\cis{\bcdot}\bish[\psi_2])\right]} \\ 
\leq \ C_\alpha(1+r^\delta)r^m\hnrm{\psi_1-\psi_2}.
\end{multline}
All these estimates hold for $\psi_1,\psi_2\in\overline{\mathbb{B}_{\smoo^\alpha}(0;r^{1+\delta})}$.
\smallskip

\noindent{{\bf Step~4.} {\em Completing the proof}}

\noindent{Applying the bounds \eqref{E:invrtreg} for the operator $\invrt_r$ to the 
estimates \eqref{E:estQ}, \eqref{E:estReRem} and
\eqref{E:estImRem}, we see that there exists a constant $L_\alpha>0$ such that
\begin{align}
\hnrm{H(\psi_1;r)-H(\psi_2;r)} \ \leq& \ \frac{L_\alpha}{r^{m-1}}
        \left(r^m(1+r^\delta)+\sum_{j=2}^mr^{(m-1)+\delta(j-1)}\right)\hnrm{\psi_1-\psi_2} \notag \\ 
\leq& \ 2L_\alpha r^\delta\hnrm{\psi_1-\psi_2} \notag \\
& \ \forall\psi_1,\psi_2\in\overline{\mathbb{B}_{\smoo^\alpha}(0;r^{1+\delta})} \ \text{\em and} \ 
    \forall r\in(0,r_2), \label{E:cntract}
\end{align}
where $r_2\in(0,r_1)$ is so small that the second inequality is valid for all $r\in(0,r_2)$.
Furthermore, we deduce from the estimates \eqref{E:estReRem_u} and \eqref{E:estReRem_hlpart}
(and by the same argument that leads to the estimate \eqref{E:estImRem}) that there
exists a constant $K_\alpha>0$ such that
\begin{equation}\label{E:zeroImage}
\hnrm{H(0;r)} \ = \ \hnrm{\invrt_r\left[(\er\rem)\circ g_r+\cnj[(\mi\rem)\circ g_r]\right]} \
\leq \ K_\alpha r^2 \;\; \forall r\in (0,3\rho/4).
\end{equation}}

Let $r_3>0$ be so small that:
\begin{align}
2L_\alpha r^\delta \ &\leq \ 1/2 \quad \text{\em and} \notag \\
K_\alpha r^2 \ &\leq \ r^{1+\delta}/2 \;\; \forall r\in(0,r_3). \notag
\end{align}
Set $R_0 \ := \ \min(3\rho/4,r_2,r_3)$. Then, for any $r\in(0,R_0)$,
\begin{multline}\label{E:cntrctn}
\psi_1,\psi_2\in\overline{\mathbb{B}_{\smoo^\alpha}(0;r^{1+\delta})} \\
\Longrightarrow
\ \hnrm{H(\psi_1;r)-H(\psi_2;r)}\leq (1/2)\hnrm{\psi_1-\psi_2} 
\quad\text{[{\em due to} \eqref{E:cntract}]},
\end{multline}
and, furthermore
\begin{align}
\psi\in\overline{\mathbb{B}_{\smoo^\alpha}(0;r^{1+\delta})} \ \Longrightarrow
\ \hnrm{H(\psi;r)} &\leq (1/2)\hnrm{\psi}+\hnrm{H(0;r)} &&\text{[{\em due to} \eqref{E:cntrctn}]} \notag \\
&\leq r^{1+\delta}. &&\text{[{\em due to} \eqref{E:zeroImage}]} \notag
\end{align}
This last fact and the estimate \eqref{E:cntrctn} enable us to apply the contraction mapping principle
to $H(\bcdot;r):\overline{\mathbb{B}_{\smoo^\alpha}(0;r^{1+\delta})}\lrarw
\overline{\mathbb{B}_{\smoo^\alpha}(0;r^{1+\delta})}$ for each $r\in(0,R_0)$ --- owing to which
we get:   
\smallskip

\noindent{{\bf Fact~B.} {\em For each $r\in(0,R_0)$, there exists a unique
$\psi_r\in\overline{\mathbb{B}_{\smoo^\alpha}(0;r^{1+\delta})}$ such that \linebreak
$H(\psi_r;r)=\psi_r$.}}
\smallskip

Before proceeding any further, we record that (shrinking $R_0>0$ further if necessary) 
{\em $\hnrm{\psi_r}$ is not comparable to $\hnrm{g_r}(\thickapprox r) \ \forall r\in(0,R_0)$, which
ensures that the desired analytic discs will be non-constant.} Let us now write $G(r):=\psi_r$.
Recalling the discussions at the end of Step~1 and Step~2 of this proof, we see that the 
desired analytic discs $\mathfrak{g}(r)$ are the analytic maps defined by the boundary condition:
\[
\left.\mathfrak{g}(r)\right|_{\bdy\dee} \ = \ (g_r,(\kappa r)^m)+
\left(\cis{\bcdot}\bish[G(r)],\,i\bish[(\mi\rem)\circ(\cis{\bcdot}\bish[G(r)]+g_{r})]\right).
\]
The analytic discs {\em per se} are the Poisson integrals of the functions on the right-hand
side above. Standard facts about the Poisson integral imply that in order to show that
$\mathfrak{g}:(0,R_0)\lrarw A^\alpha(\dee;\cplx^2)$ is of class $\smoo^1$, it suffices
to show that $G$ is smooth on $(0,R_0)$.
\smallskip

We remark that the inequalities in Step~3, which culminate in the
estimate \eqref{E:cntract}, could have been carried out in an exactly analogous manner (i.e.
by applying the fundamental theorem of calculus appropriately) with both $\psi$ and $r$ taken 
to be variable. We refrained from doing this so as to avoid writing out lengthy, but essentially 
basic, estimates. However, the estimates in Step~3 have been presented sufficiently carefully that
we may leave it to the reader to emulate them, and verify that there exist constants 
$K_1, K_2>0$ such that, shrinking $R_0>0$ further if necessary:
\begin{align}
\hnrm{H(\psi_1;r_1)-H(\psi_2;r_2)} \ \leq \ (1/2)&\hnrm{\psi_1-\psi_2} + \left(K_1+
			\frac{K_2}{r_1^{m-1}r_2^{m-1}}\right)|r_1-r_2| \notag \\
\forall(\psi_1,r_1), (\psi_2,r_2)\in
		\{(\psi,r)&\in\smoo^\alpha(\bdy\dee;\rl)\times(0,R_0):\hnrm{\psi}\leq r^{1+\delta}\},
\label{E:conti}
\end{align}
where $\delta>0$ is as chosen in Step~3.
\smallskip

Let us now consider again the map 
$\Phi:\OM_\alpha\times(0,3\rho/4)\lrarw\smoo^\alpha(\bdy\dee;\rl)$.
We refer back to the beginning of this proof to compute that the total derivative of $\Phi$ at the
point $(\psi,r)$ has the matrix representation
\[
D\Phi(\psi,r) \ = \ \left[\,\Lambda_{r}+O(r^m) \; \; \; \partial_r\Phi(\psi_{r},r) \ \right]
	: \smoo^\alpha(\bdy\dee;\rl)\oplus\rl\lrarw\smoo^\alpha(\bdy\dee;\rl),
\]
where $\Lambda_{r}$ is as defined in Step~2, and $\partial_r\Phi$ denotes the partial
Fr{\'e}chet derivative with respect to $r$. It is easy to show that the latter exists, and that
$D\Phi$ varies continuously with $(\psi,r)\in\OM_\alpha\times(0,3\rho/4)$.
Shrinking $R_0$ if necessary, it follows from our Claim in Step~2 that
$\left.\partial_{\psi}\Phi\right|_{(\psi_r,r)}$ is an isomorphism for each $r\in (0,R_0)$.
The reader is now referred to the note at the beginning of Step~2. In view of our last assertion,
and the fact that $\Phi(\psi_r,r)=0$, we can now apply the Implicit Function Theorem. For 
a given $r^0\in(0,R_0)$, there exist a $\smoo^\alpha$-open neighbourhood $\omega(r^0)$ and 
an interval $I(r^0)\Subset(0,R_0)$ containing $r^0$ such that:
\begin{itemize}
 \item For each $r\in I(r^0)$, there exists a {\em unique} $\psi\in \omega(r^0)$
 such that $\Phi(\psi,r)=0$.
 \item If we designate this $\psi$ as $\gamma_{r^0}(r)$, then $\gamma_{r^0}$ is of class
 $\smoo^1$ on $I(r^0)$.
\end{itemize}
We now recall that $H(G(r),r)=G(r) \ \forall r\in(0,R_0)$. Applying this fact to \eqref{E:conti},
we get
\[
 \frac{|G(r)-G(r^0)|}{2} \ \leq \left(K_1 + \frac{K_2}{(r^0)^{m-1}r^{m-1}}\right)|r^0-r|,
\]
whence $\lim_{r\to r^0}G(r)=G(r^0)$. Combining this with the conclusions of the Implicit
Function Theorem, there exists an open interval $I^\prime(r^0)\subseteq I(r^0)$ such that
\[
 \left.G\right|_{I^\prime(r^0)} \ = \ \left.\gamma_{r^0}\right|_{I^\prime(r^0)}.
\]
But as $\gamma_{r^0}$ is $\smoo^1$-smooth, and $r^0\in (0,R_0)$ was picked arbitrarily,
we conclude that $G$ is $\smoo^1$-smooth. Owing to the mapping properties of the Poisson kernel, 
it follows from Fact~B that the H{\"o}lder norms, thus the sup-norms of $\mathfrak{g}(r)$, 
shrink to zero as $r\lrarw 0^+$. \hfill $\Box$

\begin{remark}\label{R:Forst}
It might seem to the reader that since (in the notation of the proof above) 
$\{(\kappa r\widetilde{G},(\kappa r)^m):r\in (0,1)\}$ is a family of analytic discs with
boundaries in $\Gamma(\lead)\setminus\{(0,0)\}$, we could use Forstneri{\v{c}}'s results 
in \cite{forstneric:adbmrsCC87} to give a ``quick'' proof of Theorem~\ref{T:posInd}. However,
the relevant theorems in \cite{forstneric:adbmrsCC87}, i.e. Theorems~1 and 3, are 
{\em non-quantitative}. We would have to augment them with estimates (in a similar spirit
to those in the proof above) to learn whether $\Gamma(\lead+\rem)\setminus\{(0,0)\}$ is
``close enough'' to $\Gamma(\lead)\setminus\{(0,0)\}$ for us to deduce the existence of
the desired family $\{\mathfrak{g}(r):r\in (0,R_0)\}$ (especially the existence of
$\mathfrak{g}(r)$'s {\em arbitrarily close} to the CR~singularity).
\end{remark}

\section{A comparison of Theorem~\ref{T:posInd} with previous results}\label{S:nonSubh}

Theorem~\ref{T:posInd} is reminiscent of some of results in \cite{wiegerinck:lpchdcrs95} 
about the existence of analytic discs in the polynomially-convex hull around a
degenerate CR~singularity. We paraphrase Wiegerinck's results to the context that we have
been studying.

\begin{result}[{paraphrasing parts of Theorem~3.3 and 3.5, \cite{wiegerinck:lpchdcrs95}}]\label{R:Wiegerinck}
Let $\varphi$ be $\smoo^{m+1}$-smooth function defined in a neighbourhood of $0\in\cplx$
that vanishes to order $m$ at $0$. Write
\[
\varphi(z) \ = \ \lead(z)+\rem(z) \quad(\text{with $|z|$ sufficiently small}),
\]
where $\lead$ is polynomial that is homogeneous of degree $m$, and $\rem(z)=O(|z|^{m+1})$.
Suppose $(0,0)$ is an isolated CR~singularity of $\Gamma(\varphi)$ and that $\lead$ 
is real-valued. If $\MasInd(\Gamma(\varphi),0)>0$ and $\lead$ is a subharmonic,
non-harmonic function, then $\Gamma(\varphi)$ is not locally polynomially convex at 
$(0,0)$.
\end{result}   
\smallskip

Results like the above rely strongly on the results of Chirka~\&~Shcherbina
\cite{chirkaShch:prd&fhg95} (also refer to \cite{shch:phg93} by Shcherbina), which 
can be used to analyse the structure of the polynomially-convex hulls of graphs of functions
defined on certain classes of sets in $\CC$ that are homeomorphic to the $2$-sphere. 
The potential-theoretic ideas used in \cite{shch:phg93} and \cite{chirkaShch:prd&fhg95}
shape the hypotheses of the results therein. Those hypotheses lead to certain subharmonicity
conditions being imposed in the results of \cite{wiegerinck:lpchdcrs95}. For example, in the
setting of Result~4.1, they translate into the requirement that $\lead$ be a subharmonic, 
non-harmonic function. This raises the following question: {\em with the hypotheses imposed 
on $\lead$ in Theorem~\ref{T:posInd}, is it possible that $\lead$ is automatically subharmonic?} 
If this were the case, then Theorem~\ref{T:posInd} would be a special case of the results in
\cite{wiegerinck:lpchdcrs95}.
\smallskip

We demonstrate in this section that the answer to the above question is negative. There are
pairs $(\srf,p)$, where $p$ is an isolated degenerate CR~singularity, to which Wiegerinck's
hypotheses do not apply but which admit Bishop discs.
The point of Theorem~\ref{T:posInd} was to demonstrate some techniques
for examining the local polynomially-convex hull near an isolated CR~singularity that do not 
require any subharmonicity-type conditions. We now present a one-parameter family of relevant
counterexamples. 

\begin{example}\label{Ex:nonSubh}
For each $C\in(1/3,2/3)$, there exists an $\eps_C>2/3$ such that the real-valued, homogeneous
polynomial
\[
\leadC(z) \ := \ \frac{C}{2}(z^4+\zbar^4)+\eps_C(z^3\zbar+z\zbar^3)+|z|^4
\]
has the following properties:
\begin{itemize}
\item[a)] $0$ is an isolated CR~singularity of $\Gamma(\leadC)$ satisfying
$\MasInd(\Gamma(\leadC),0)>0$; and
\item[b)] $\lead$ is ${\rm not}$ subharmonic.
\end{itemize}
\end{example}

\noindent{To arrive at a polynomial with the above properties, let us first examine
\[
\mathscr{F}(z;\eps,C) \ := \ \frac{C}{2}(z^4+\zbar^4)+\eps(z^3\zbar+z\zbar^3)+|z|^4.
\]
Then
\[
\bdy^2_{z\zbar}\mathscr{F}(z;\eps,C) \ = \ 3\eps(z^2+\zbar^2)+4|z|^2 \ = \ (6\eps\cos2\theta+4)|z|^2,
\]
where, as usual, we write $z=|z|\cis{\theta}$. Then, clearly
\begin{equation}\label{E:subhFail}
\text{\em $\mathscr{F}(\bcdot;\eps,C)$ fails to be subharmonic} \ \iff \ \eps>2/3.
\end{equation}
From Lemma~\ref{L:key}, we realise that
\begin{equation}\label{E:positivity}
\MasInd(\Gamma(\leadC),0)>0 \ \iff \ \leadC(\cis{\theta})\neq 0 \;\; \forall\theta\in\rl.
\end{equation}
Hence, to begin with, we shall examine whether there are any values of the parameter $C$
such that
\[
\mathscr{F}(\cis{\theta};2/3,C) \ = \ 2C(\cos2\theta)^2+(4/3)\cos2\theta+(1-C) \ > \ 0 
\;\; \forall\theta\in\rl.
\]
We will then perturb the parameter $\eps$ away from $\eps=2/3$ so as to ensure that positivity 
is preserved, {\em but subharmonicity fails}. To this end, we set $X:=\cos2\theta$ in the above
inequality
to get
\begin{equation}\label{E:posIndIneq}
2CX^2+(4/3)X+(1-C) \ > 0.
\end{equation}
Note that:
\begin{align}
\eqref{E:posIndIneq} \ \iff& \ \begin{cases}
                C>0, \  \text{and} \\
                (4/3)^2-8C(1-C)<0.
                \end{cases} \notag \\
            \iff& \ C\in(1/3,2/3). \notag
\end{align}
This shows that for each $C\in(1/3,2/3)$, $\mathscr{F}(\bcdot;2/3,C)>0$ on $\cplx\setminus\{0\}$.}
\smallskip

Finally, note that
\begin{itemize}
\item $S^1\times\{2/3\}\times\{C\}$ is a compact subset of $S^1\times(\rl_+)\times(1/3,2/3)$; and
\item $\left.\mathscr{F}\right|_{S^1\times\{2/3\}\times\{C\}}>0$ for each $C\in(1/3,2/3)$.
\end{itemize}
Since $\mathscr{F}$ is continuous on $S^1\times(\rl_+)\times(1/3,2/3)$, there exists a 
$\delta(C)>0$ such that 
\begin{equation}\label{E:perturb}
(\eps,C)\in\mathbb{B}^2((2/3,C);\delta(C)) \ \Longrightarrow \ \mathscr{F}(\cis{\bcdot};\eps,C)>0.
\end{equation}
We now pick an $\eps_C\in(2/3,2/3+\delta(C))$ and define $\leadC:=\mathscr{F}(\bcdot;\eps_C,C)$.
From \eqref{E:positivity}, \eqref{E:posIndIneq} and \eqref{E:perturb}, we conclude 
that $\leadC$ satisfies property~(a). 
We have chosen $\eps_C>2/3$; hence, by \eqref{E:subhFail}, $\leadC$
fails to be subharmonic. \hfill $\blacksquare$
\medskip

\section{The proof of Theorem~\ref{T:negInd}}\label{S:negInd}
\medskip

A non-trivial result that we will require is the following theorem by Forstneri{\v{c}}, 
which we shall paraphrase:

\begin{result}[paraphrasing Theorem~2, \cite{forstneric:adbmrsCC87}]\label{R:forstInd}
Let $M$ be a maximally totally-real $\smoo^4$-smooth submanifold of an open subset of $\CC$,
and let $\mathfrak{g}\in A^\alpha(\dee;\CC)$ be an immersed analytic disc with boundary
in $M$ such that the tangent bundle $TM$ is trivial over an $M$-open neighbourhood of
$\mathfrak{g}(\bdy\dee)$. If $\pathInd{{\mathfrak{g}(\cis{\bcdot})}}\leq 0$, then there
is an open neighbourhood $\OM\subset A^\alpha(\dee;\CC)$ of $\mathfrak{g}$ such that
the only analytic discs $F\in\OM$ with boundary in $M$ are of the form 
$\mathfrak{g}\circ\varphi$, where $\varphi\in{\rm Aut}(\dee)$.
\end{result} 

\begin{remark}\label{Rem:forstInd}
Theorem~2 in \cite{forstneric:adbmrsCC87} has been stated --- in the notation of
Result~\ref{R:forstInd} --- only for $\mathfrak{g}\in A^{1/2}(\dee;\CC)$. However,
the observations made in \cite[Remark~1]{forstneric:adbmrsCC87} about Theorem~1 apply
as well to Theorem~2 in \cite{forstneric:adbmrsCC87}. In other words, we can allow
$\mathfrak{g}\in A^\alpha(\dee;\CC)$ in the hypothesis of the latter theorem.
\end{remark}
\smallskip
 
We are now ready to provide 
\smallskip

\noindent{{\bf The proof of Theorem~\ref{T:negInd}.} We first consider Part~(1). Let 
$(\srf,p)$ be as described in the hypothesis of the theorem. As before, we may work with the
graph $\Gamma(\lead+\rem)$, where $\lead$ and $\rem$ have the same meanings as in \eqref{E:normform2*}.
Since $\MasInd(\srf,p)$ is invariant under a holomorphic change of coordinate, arguing exactly as
in the proof of Theorem~\ref{T:posInd}, $\MasInd(\Gamma(\lead),0)\leq 0$. By hypothesis, and
the formula \eqref{E:keyWindex} in Lemma~\ref{L:key}, we conclude that $\lead$ changes sign. To see 
this, we rely on the fact that $(0,0)$ is an isolated CR~singularity of $\Gamma(\lead)$. We have
discussed that in this case --- see equation \eqref{E:zbar} --- if $\lead(\cis{\bcdot})$ has zeros,
then it has only simple zeros. This fact --- combined with the fact that, by the formula 
\eqref{E:keyWindex}, $\lead(\cis{\bcdot})^{-1}\{0\}\neq\emptyset$ --- implies that $\lead$
must change sign. Then,
each level set $\lead^{-1}\{c\}, \ c\in\rl$, is a finite union of disjoint arcs in $\cplx$.}
\smallskip

Since $\rem(z)=O(|z|^{m+1})$, there exists a $\delta>0$ which is sufficiently small that the level 
sets of $\left.(\lead+\rem)\right|_{\overline{D(0;\delta)}}$, i.e. the sets
\begin{equation}\label{E:levels}
\{z\in \overline{D(0;\delta)}:(\lead+\rem)(z)=c\}
\end{equation}
do not separate $\cplx$ for each $c>0$. We now appeal to the
following:

\begin{result}[Theorem~1.2.16, \cite{stout:pc07}]\label{R:Mergelyan}
If $X\subset\Cn$ is compact and if $\mathscr{P}(X)$ contains a real-valued function $f$, then
$X$ is polynomially convex if and only if each fibre $f^{-1}\{c\}$, $c\in\rl$, is polynomially convex.
\end{result}

\noindent{We clarify that, for $X\subset\Cn$ compact,
\begin{align}
\mathscr{P}(X) \ &:= \ \text{the uniform algebra on $X$ generated by} \notag\\  
                &\qquad\qquad\text{the class
                $\{P|_X:P\in\cplx[z_1,\dots,z_n] \ \}$}.\notag
\end{align}
Taking $X:= \Gamma(\lead+\rem; \overline{D(0;\delta)})$ and $f(z,w):=w$, and observing that
each of the sets in \eqref{E:levels} is polynomially convex, we conclude from Result~\ref{R:Mergelyan}
that $\Gamma(\lead+\rem)$ is locally polynomially convex at $(0,0)$ --- or, equivalently, that
$\srf$ is locally polynomially convex at $p$.}
\smallskip

We now consider Part~(2). As before, we shall work in the coordinate system $(z,w)$ with
respect to which $(\srf,p)$ has the representation \eqref{E:normform2*}. Suppose, for some
$\alpha\in(0,1)$, there exists a continuous
one-parameter family $\mathfrak{g}:(0,1)\lrarw A^\alpha(\dee;\CC)$ of immersed, non-constant
analytic discs with the following three properties:
\begin{itemize}
\item[a)] $\mathfrak{g}(t)(\bdy\dee)\subset(\srf\setminus\{p\})\cap U_p \ \forall t\in(0,1)$.
\item[b)] $\mathfrak{g}(t)(\cis{\bcdot})$ is a simple closed curve in $\srf \
\forall t\in(0,1)$.
\item[c)] $\mathfrak{g}(t)(\zt)\lrarw\{p\}$ for each $\zt\in\overline{\dee}$ as $t\lrarw 0^+$.
\end{itemize}
Let $\delta_0>0$ be so small that for every smooth, positively-oriented, simple closed path 
$\gamma: S^1\lrarw D(0;\delta_0)\setminus\{(0,0)\}$,
\begin{equation}\label{E:approxWind}
{\sf Wind}\left(\partl{(\lead+\rem)}{\zbar}\circ\gamma,0\right) \ = \
{\sf Wind}\left(\partl{\lead}{\zbar}\circ\gamma,0\right).
\end{equation}
Such a $\delta_0>0$ exists because $\rem(z)=O(|z|^{m+1})$. Then, in view of Lemma~\ref{L:windex}
and the properties (a)--(c), \eqref{E:approxWind} allows us to infer that there exists a 
$t_0\in(0,1)$ such that 
\begin{multline}
\qquad \pathInd{{\mathfrak{g}(t)(\cis{\bcdot})}}(\Gamma(\lead+\rem))
 \ = \ \MasInd(\Gamma(\lead+\rem),0) \\ 
= \ \MasInd(\Gamma(\lead),0) \ \leq \ 0, \;\; \forall t\in(0,t_0). \qquad {} \notag
\end{multline}
We pick a $t^*\in(0,t_0)$. Since $\srf$ is now assumed to be $\smoo^4$-smooth, we can 
apply Result~\ref{R:forstInd}. By this result, $\exists\eps>0$ such that
for any analytic disc $F\in A^\alpha(\dee;\CC)$ with boundary in 
$\Gamma(\lead+\rem)\setminus\{(0,0)\}$ such that $0<\hnrm{F-\mathfrak{g}(t^*)}<\eps$,
$F=\mathfrak{g}(t^*)\circ\varphi$, where $\varphi\in{\rm Aut}(\dee)$. However, this
leads to a contradiction because, owing to the continuity of $\mathfrak{g}$ and to
(c) above, there must exist a $t^\prime\in(0,t_0)$, $t^\prime\neq t^*$, such that
\[
0 \ < \ \hnrm{\mathfrak{g}(t^*)-\mathfrak{g}(t^\prime)} \ < \ \eps, \; \; \text{and} \;
\; {\rm Image}(\mathfrak{g}(t^*)) \ \neq \ {\rm Image}(\mathfrak{g}(t^\prime)).
\]
Hence, our assumption about the existence of $\mathfrak{g}:(0,1)\lrarw A^\alpha(\dee;\CC)$
must be wrong, which establishes Part~(2). \hfill $\Box$ 
\bigskip

\noindent{{\bf Acknowledgements.} A large part of the preliminary work on this 
article --- looking up the extensive literature on the subject, in particular --- was
done while on a visit to the ICTP, Trieste, during February--March, 2008. The support and
hospitality of the ICTP, and the facilities provided by the ICTP library, are warmly acknowledged.
I am grateful to Nessim Sibony for suggesting a simple proof of Part~(1) of Theorem~\ref{T:negInd}
during a visit to the Universit{\'e} Paris-Sud 11 in 2009. This visit was supported by the ARCUS 
programme (French Foreign Ministry/R{\'e}gion Ile-de-France).}   
\smallskip


\begin{thebibliography}{88} 

\bibitem{bharali:sdCRslpc05}
G. Bharali, {\em Surfaces with degenerate CR singularities that are locally polynomially
convex}, Michigan Math. J. {\bf 53} (2005), 429--445.

\bibitem{bharali:palpcdCRsII}
G. Bharali, {\em Polynomial approximation, local polynomial convexity, and degenerate CR
singularities -- II}, to appear in Internat. J. Math.; arXiv preprint \texttt{arxiv:1010.5205}.

\bibitem{bishop:dmcEs65}
E. Bishop, {\em Differentiable manifolds in complex Euclidean space}, Duke Math J.
{\bf 32} (1965), 1-21.

\bibitem{chirkaShch:prd&fhg95}
E.M. Chirka and N.V. Shcherbina, {\em Pseudoconvexity of rigid domains and foliations of hulls of graphs},
Ann. Scuola Norm. Sup. Pisa Cl. Sci., Serie~IV, {\bf 22} (1995), 707-735. 

\bibitem{duvalSibony:pcrcc95}
J. Duval and N. Sibony, {\em Polynomial convexity, rational convexity, and currents},
Duke Math. J. {\bf 79} (1995), 487--513.

\bibitem{forstneric:adbmrsCC87}
F. Forstneri{\v{c}}, {\em Analytic disks with boundaries in a maximal real submanifold of
$\mathbb{C}^2$}, Ann. Inst. Fourier {\bf 37} (1987), 1-44.

\bibitem{forstnericStout:ncpcs91}
F. Forstneri{\v{c}} and E.L. Stout, {\em A new class of polynomially convex sets}, Arkiv
Mat. {\bf 29} (1991), 51-62.

\bibitem{joricke:lphdnipp97}
B. J{\"o}ricke, {\em Local polynomial hulls of discs near isolated parabolic points}, Indiana
Univ. Math. J. {\bf 46} (1997), 789-826.

\bibitem{kenigWebster:lhhss2Cv82}
C.E. Kenig and S. Webster, {\em The local hull of holomorphy of a surface in the space of
two complex variables}, Invent. Math. {\bf 67} (1982), 1-21.

\bibitem{merkerPorten:heCRf06}
J. Merker and E. Porten, {\em Holomorphic extension of CR functions, envelopes of holomorphy, and 
removable singularities},  IMRS Int. Math. Res. Surv. {\bf 2006}, Art. ID 28925, 287 pp.

\bibitem{shch:phg93}
N.V. Shcherbina, {\em On the polynomial hull of a graph}, Indiana Univ. Math. J.  
{\bf 42} (1993), 477-503.

\bibitem{stout:pc07}
E.L. Stout, {\em Polynomial Convexity}, Progress in Mathematics {\bf 261}, Birkha{\"u}ser, 2007.

\bibitem{stout:pcnhp86}
E.L. Stout, {\em Polynomially convex neighborhoods of hyperbolic points}, Abstracts AMS {\bf 7}
(1986), 174.


\bibitem{wiegerinck:lpchdcrs95}
J. Wiegerinck, {\em Locally polynomially convex hulls at degenerated CR singularities of
surfaces in $\mathbb{C}^2$}, Indiana Univ. Math. J. {\bf 44} (1995), 897-915.

\end{thebibliography}
\end{document}